\begin{document}

\theoremstyle{plain}       \newtheorem*{mainthm}{Theorem 1}
\theoremstyle{plain}       \newtheorem*{cor1}{Corollary 1}
\theoremstyle{plain}       \newtheorem*{cor2}{Corollary 2}
\theoremstyle{plain}       \newtheorem*{thm2}{Theorem 2}
\theoremstyle{plain}       \newtheorem{thm}{Theorem}[section]
\theoremstyle{definition}  \newtheorem{defn}[thm]{Definition}
\theoremstyle{plain}       \newtheorem{lemma}[thm]{Lemma}
\theoremstyle{plain}       \newtheorem{prop}[thm]{Proposition}
\theoremstyle{remark}   \newtheorem{rmk}[thm]{Remark}

\title{2-Frame Flow Dynamics and Hyperbolic Rank Rigidity in Nonpositive Curvature}
\author{David Constantine\footnote{Supported by NSF Graduate Research Fellowship}   
\footnote{\textsc{Department of Mathematics, University of Michigan, Ann Arbor, MI 48103 
U.S.A.}                  \textit{email:} \texttt{constand@umich.edu}}}
\date{\today}
\maketitle


\begin{abstract}
This paper presents hyperbolic rank rigidity results for rank 1, nonpositively curved spaces.  Let $M$ be a compact, rank 1 manifold with nonpositive sectional curvature and suppose that along every geodesic in $M$ there is a parallel vector field making curvature $-a^2$ with the geodesic direction.  We prove that $M$ has constant curvature equal to $-a^2$ if $M$ is odd dimensional, or if $M$ is even dimensional and has sectional curvature pinched as follows: $-\Lambda^2 < K < -\lambda^2$ where $\lambda/\Lambda > .93$.  When $-a^2$ is the upper curvature bound this gives a shorter proof of the hyperbolic rank rigidity theorem of Hamenst\"{a}dt, subject to the pinching condition in even dimension; in all other cases it is a new result.  We also present a rigidity result using only an assumption on maximal Lyapunov exponents in direct analogy with work done by Connell.  The proof of the main theorem is simplified considerably by assuming strict negative curvature; in fact, in all dimensions but 7 and 8 it is then an immediate consequence of ergodicity of the $(dim(M)-1)$-frame flow.  In these exceptional dimensions, recourse to the dynamics of the 2-frame flow must be made and the scheme of proof developed there can be generalized to deal with rank 1, nonpositively curved spaces.
\end{abstract}


\section{Introduction}
Rank rigidity was first proved in the higher Euclidean rank setting by Ballman \cite{Ballman} and, using different methods, by Burns and Spatzier \cite{Burns-Spatzier}.  A manifold is said to have higher Euclidean rank if a parallel, normal Jacobi field can be found along every geodesic.  Ballman and Burns-Spatzier proved that if an irreducible, compact, nonpositively curved manifold has higher Euclidean rank, then it is locally symmetric.  Ballman's proof works for finite volume as well and the most general version of this theorem is due to Eberlein and Heber, who prove it under only a dynamical condition on the isometry group of $M$'s universal cover \cite{Eberlein-Heber}.  Hamenst\"{a}dt showed that a compact manifold with curvature bounded above by -1 is locally symmetric if along every geodesic there is a Jacobi field making curvature -1 with the geodesic direction \cite{Hamenstadt}.  She called this situation higher hyperbolic rank.  Shankar, Spatzier and Wilking extended rank rigidity into positive curvature by defining spherical rank.  A manifold with curvature bounded above by 1 is said to have higher spherical rank if every geodesic has a conjugate point at $\pi$, or equivalently, a parallel vector field making curvature 1 with the geodesic direction.  They proved that a complete manifold with higher spherical rank is a compact, rank one locally symmetric space \cite{sphericalrank}.

These results settle many rank rigidity questions, but leave questions about other curvature settings open (see \cite{sphericalrank} for an excellent overview).  In this paper we prove the following theorem, which can be applied to various settings in nonpositive curvature.

\begin{mainthm} Let $M$ be a compact, (Euclidean) rank 1, nonpositively curved manifold.  Suppose that along every geodesic in $M$ there exists a parallel vector field making sectional curvature 
$-a^2$ with the geodesic direction.  If $M$ is odd dimensional, or if $M$ is even dimensional and satisfies the sectional curvature pinching condition $-\Lambda^2 < K < -\lambda^2$ with $\lambda/\Lambda > .93$ then $M$ has constant negative curvature equal to $-a^2$.
\end{mainthm}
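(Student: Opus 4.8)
The plan is to reformulate the hypothesis as a statement about a closed invariant set of the frame flow and then invoke the chaotic dynamics of that flow to force the curvature to be constant. Let $SM$ denote the unit tangent bundle, $g^{t}$ the geodesic flow, $P_t$ parallel translation along $\gamma_v$, and let $S_2M=\{(v,w):\ |v|=|w|=1,\ v\perp w\}$ carry the $2$-frame flow $\phi^{t}(v,w)=(g^{t}v,P_tw)$. For $v\in SM$ put
\[
 \mathcal{E}_v=\bigl\{\,w\in v^{\perp}:\ |w|=1,\ \langle R(P_tw,\dot\gamma_v(t))\dot\gamma_v(t),P_tw\rangle=-a^{2}\ \ \forall\, t\in\mathbb{R}\,\bigr\},
\qquad
 \mathcal{E}=\{(v,w):w\in\mathcal{E}_v\}\subset S_2M .
\]
Then $\mathcal{E}$ is closed and $\phi^{t}$-invariant, and the hypothesis on $M$ says \emph{exactly} that $\mathcal{E}$ projects onto all of $SM$. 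Since $\mathcal{E}=S_2M$ would mean that every tangent $2$-plane has curvature $-a^{2}$, the whole problem is to upgrade ``$\mathcal{E}$ surjects onto $SM$'' to ``$\mathcal{E}=S_2M$''.

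The first substantive step -- and the only point at which \emph{nonpositivity} of the curvature, rather than a mere two-sided bound, is used -- is a rigidity statement for a single geodesic: if $w\in\mathcal{E}_v$ then in fact $R(P_tw,\dot\gamma_v(t))\dot\gamma_v(t)=-a^{2}P_tw$ for every $t$, so $e^{-at}P_tw$ and $e^{at}P_tw$ are the strong stable and unstable Jacobi fields along $\gamma_v$ through $w$. I expect this from a Jacobi-field and index-form comparison along the plane field $t\mapsto\mathrm{span}(\dot\gamma_v(t),P_tw)$, whose curvature is pinned at $-a^{2}$ while the ambient curvature is $\le 0$. Granting it, $\mathcal{E}_v$ is the unit sphere of the subspace $W_v:=\bigcap_{t}P_{-t}\bigl(\ker(R_{\dot\gamma_v(t)}+a^{2}\,\mathrm{Id})\bigr)$ of $v^{\perp}$; since $P_sW_v=W_{g^{s}v}$ by construction, $v\mapsto W_v$ is a measurable, $g^{t}$-invariant subbundle of the normal bundle over $SM$ which, by the surjectivity above, is nowhere zero.

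Now the dynamical input. A measurable, $g^{t}$-invariant subbundle of fixed rank of the normal bundle is the same datum as a $\phi^{t}$-invariant measurable section of the associated Grassmann bundle of the $(\dim M-1)$-frame bundle, and such a section forces the transitivity group of the frame flow -- a closed subgroup of $SO(\dim M-1)$ -- to fix a point of that Grassmannian, i.e.\ to leave invariant a subspace of $\mathbb{R}^{\dim M-1}$ of the given dimension. When $M$ has strictly negative curvature and $\dim M\notin\{7,8\}$, ergodicity of the $(\dim M-1)$-frame flow is known (Brin--Gromov, Brin--Karcher, and refinements, the even-dimensional pinching those require being far milder than ours), so the transitivity group is all of $SO(\dim M-1)$, which is irreducible on $\mathbb{R}^{\dim M-1}$; hence $W_v$ has rank $0$ or $\dim M-1$, and being nowhere zero it equals $v^{\perp}$ for almost every $v$, i.e.\ $R_{\dot\gamma_v}\equiv-a^{2}\,\mathrm{Id}$, so every sectional curvature equals $-a^{2}$ on a full-measure, hence by continuity on every, unit vector -- this is the case the abstract calls ``immediate.'' In the remaining situations -- dimensions $7$ and $8$, and the general rank $1$ nonpositively curved case, where off-the-shelf frame-flow ergodicity is unavailable -- one instead analyzes the $2$-frame flow directly, showing that its transitivity group still acts irreducibly on $v^{\perp}$ (equivalently, transitively on the fibre $S^{\dim M-2}$) unless the holonomy of $M$ reduces to $U(m)$, $Sp(m)\cdot Sp(1)$, $\mathrm{Spin}(7)$ or $\mathrm{Spin}(9)$, i.e.\ unless $M$ is, up to scale, complex, quaternionic or Cayley hyperbolic space; these are $1/4$- or $1/2$-pinched and so are excluded by the hypothesis ($\dim M$ odd, which already rules out all of them, or $\lambda/\Lambda>.93$), the explicit constant being what drops out of the estimates that drive the transitivity argument in the nonuniformly hyperbolic regime. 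Either way $W_v=v^{\perp}$ for every $v$, and $M$ has constant curvature $-a^{2}$. (The rank $1$ hypothesis is used twice: it makes the geodesic flow ergodic, and -- since a nontrivial Riemannian product has Euclidean rank at least $2$ -- it is what excludes the reducible case in the holonomy discussion.)

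I expect the main obstacle to be exactly this last part: transporting the Brin-type frame-flow machinery -- the transitivity group, its generation by the holonomies of homoclinic orbits, and the accessibility argument -- into the rank $1$ nonpositively curved setting, where the flow is only nonuniformly hyperbolic and the stable and unstable distributions and their holonomies are defined only almost everywhere, together with the low-dimensional bookkeeping peculiar to dimensions $7$ and $8$ and the extraction of the sharp pinching threshold. By contrast, the reduction in the first paragraph is purely formal, and the single-geodesic lemma, while it genuinely rests on $K\le 0$, should be a routine Jacobi-field computation.
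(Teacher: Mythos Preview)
Your reduction to the closed $\phi^t$-invariant set $\mathcal{E}\subset S_2M$ and the overall strategy---show the transitivity group of the 2-frame flow preserves $\mathcal{E}$ and acts transitively on fibres---is the paper's. The gap is your ``first substantive step'': the claim that $w\in\mathcal{E}_v$ forces $R(P_tw,\dot\gamma_v)\dot\gamma_v=-a^{2}P_tw$, i.e.\ that $P_tw$ is an \emph{eigenvector} of the Jacobi operator. This is true when $-a^{2}$ is an extremal curvature value, since then $w$ realises the minimum or maximum of the quadratic form $u\mapsto\langle R_{\dot\gamma}u,u\rangle$ on the unit sphere of $\dot\gamma^{\perp}$ and is therefore an eigenvector. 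But Theorem~1 is stated, deliberately, so as to allow $-a^{2}$ to lie strictly between the curvature bounds (the paper stresses this just after the statement). In that case the fibre $\mathcal{E}_v$ is an intersection over $t$ of non-extremal level sets of quadratic forms on the sphere---generically quadrics of codimension one, not unit spheres of linear subspaces---and there is no invariant linear subbundle $W_v$ for your irreducibility argument to bite on. Nonpositivity of the ambient curvature contributes nothing here; no index-form comparison manufactures an eigenvector equation from a single diagonal entry $\langle R_{\dot\gamma}w,w\rangle$.

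The paper sidesteps this by never seeking linear structure on $\mathcal{E}_v$. It shows instead that each generator $p(v,v')$ of the transitivity group carries $\mathcal{E}_v$ into $\mathcal{E}_{v'}$ as a \emph{set}: transport a distinguished parallel field out to infinity along $\gamma_v$ and back along an asymptotic $\gamma_{v'}$; the curvature of the image field with $\dot\gamma_{v'}$ tends to $-a^{2}$ at infinity, and a recurrence argument (Lemma~2.5) promotes this to identically $-a^{2}$. Once the transitivity group is shown to act transitively on the fibre sphere $S^{n-2}$ (Brin--Gromov in odd dimension, Brin--Karcher under the $.93$ pinching in even dimension, with the rank~1 adaptations of Section~3 replacing off-the-shelf frame-flow ergodicity), a single point of $\mathcal{E}_v$ sweeps out all of $v^{\perp}$. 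Your eigenvector reformulation is tidier and essentially equivalent to this when $-a^{2}$ is extremal---so it would prove Corollaries~1 and~2---but for the full Theorem~1 you need the set-level invariance argument, and the recurrence lemma is the missing ingredient.
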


In strict negative curvature, the proof of this theorem is considerably simpler.  In fact, for negative curvature the full frame flow is ergodic under the conditions of Theorem 1 in all dimensions but 7 and 8 (\cite{Brin-Gromov} for odd dimensions, \cite{Brin-Karcher} for even dimensions).  Then the proof of Theorem 1 is immediate by considering a frame with dense orbit.  For dimensions 7 and 8 ergodicity of the $(n-1)$-frame flow holds under very strong curvature pinching (see \cite{Pollicott-Burns}) but under the curvature restrictions of Theorem 1 one only has ergodicity of the 2-frame flow.  Note that ergodicity of this flow alone does not establish Theorem 1 since the set of 2-frames giving the distinguished sectional curvature $-a^2$ may, a priori, have zero measure.  However, the ergodic theory of these types of flows, developed by Brin, proceeds via explicit geometric descriptions of the ergodic components, and this allows Theorem 1 to be obtained from the 2-frame flow dynamics alone.  The proof proceeding via 2-frame flow gives the result in the exceptional dimensions 7 and 8 in negative curvature.  In addition, it suggests an adaptation to the rank 1 nonpositive curvature setting, where the ergodic theory of frame flows has not been developed.  The simplifications possible in the strictly negative curvature setting will be noted throughout the paper, but here we observe that although obtaining this paper's result for nonpositively curved rank 1 spaces necessitates a more technical proof, the resulting theorem forms a better complement to the rank rigidity theorem of Ballmann and Burns-Spatzier.

Note that, unlike previous rank rigidity results, Theorem 1 allows for situations where the distinguished curvature $-a^2$ is not extremal.  However, in the cases where $-a^2$ is extremal the hypotheses of our theorem can be weakened, as demonstrated in section \ref{sec:parallel} of this paper.  The following two results are then easy corollaries of Theorem 1:

\begin{cor1}
Let $M$ be a compact, rank 1 manifold with sectional curvature $-1 \leq K \leq 0$.  Suppose that along every geodesic in $M$ there exists a Jacobi field making sectional curvature $-1$ with the geodesic direction.  If $M$ is odd dimensional, or if $M$ is even dimensional and satisfies the sectional curvature pinching condition $-1 \leq K < -.93^2$ then $M$ is hyperbolic.
\end{cor1}

\begin{cor2} \emph{(compare with Hamenst\"{a}dt \cite{Hamenstadt})}
Let $M$ be a compact manifold with sectional curvature bounded above by $-1$.  Suppose that along every geodesic in $M$ there exists a Jacobi field making sectional curvature $-1$ with the geodesic direction.  If $M$ is odd dimensional, or if $M$ is even dimensional and satisfies the sectional curvature pinching condition $-(1/.93)^2 < K \leq -1$ then $M$ is hyperbolic.
\end{cor2}

In Corollary 1 we obtain a new rank rigidity result analogous to those described above.  This is the first positive result for lower rank, i.e. when the distinguished curvature value is the lower curvature bound; \cite{sphericalrank} provides a discussion of counterexamples to lower spherical and Euclidean rank rigidity.  In Corollary 2 we obtain a shorter proof of Hamenst\"{a}dt's result, under an added pinching constraint in even dimension.

In \cite{Connell}, Connell showed that rank rigidity results can be obtained using only a dynamical assumption on the geodesic flow, namely an assumption on the Lyapunov exponents at a full measure set of unit tangent vectors.  His paper deals with the upper rank situations treated by Ballman, Burns-Spatzier and Hamenst\"{a}dt.  He proves that having the minimal Lyapunov exponent allowed by the curvature restrictions attained at a full measure set of unit tangent vectors is sufficient to apply the results of Ballman and Burns-Spatzier or Hamenst\"{a}dt.  In the lower rank setting of this paper, this viewpoint translates into

\begin{thm2}
Let $M$ be a compact, rank 1 manifold with sectional curvature $-a^2 \leq K \leq 0$, where $a > 0$.  Endow $T^1M$ with a fully supported ergodic measure; one can take the measure of maximal entropy or, if the curvature is known to be negative, the Liouville measure.  Suppose that for a full measure set of unit tangent vectors $v$ on $M$ the maximal Lyapunov exponent at $v$ is $a$, the maximum allowed by the curvature restriction.  If $M$ is odd dimensional, or if $M$ is even dimensional and satisfies the sectional curvature pinching condition $-a^2 \leq K < -\lambda^2$ with $\lambda/a > .93$ then $M$ is of constant curvature $-a^2$.
\end{thm2}

\noindent The adaptation of Connell's arguments for this setting is discussed in section \ref{sec:Lyap}.

The proof of Theorem 1 relies on dynamical properties of the geodesic and frame flows on nonpositively curved manifolds.  The work of Brin and others is the starting point for the proof; the results needed are summarized in Section \ref{sec:background} (see also \cite{Brin-survey} for a survey of the area).  Although none of his work is undertaken for rank 1 nonpositively curved manifolds, the ideas used in this paper to deal with that situation are clearly inspired by Brin's work.  The proof will proceed as follows.  We utilize the transitivity group $H_v$, defined for any vector $v$ in the unit tangent bundle of $M$, which acts on $v^{\perp} \subset T^1M$.  Essentially, elements of $H_v$ correspond to parallel translations around ideal polygons in $M$'s universal cover.  In negative curvature, Brin shows that this group is the structure group for the ergodic components of the frame flow (see e.g \cite{Brin-survey} or \cite{Brin-gpext}).  For the rank 1 nonpositive curvature case the definition of this group must be adjusted and we use only that it is the structure group for a subbundle of the frame bundle.  The considerations for the rank 1 case are discussed in section \ref{sec:rank1}. In section \ref{sec:trans} we show that $H_v$ preserves the parallel fields that make curvature $-a^2$ with the geodesic defined by $v$.  Finally, we apply results of Brin-Gromov (adapted to the rank 1 case) and Brin-Karcher on the 2-frame flow which imply that $H_v$ acts transitively on $v^\perp$ and conclude that the curvature of $M$ is constant.

I would like to thank Chris Connell for discussions helpful with the arguments in section \ref{sec:parallel} of this paper, Jeffrey Rauch for the proof of Lemma \ref{Rauchlemma}, and Ben Schmidt for helpful comments on this paper.  In particular, special thanks are due to my advisor, Ralf Spatzier, for suggesting this problem, for help with several pieces of the argument and for helpful comments on this paper.


\section{Notation and background} \label{sec:background}

\subsection{Notation}
Let us begin by fixing the following notation:  

\begin{itemize}
\item $M$: a compact, rank 1 Riemannian manifold with nonpositive sectional curvature, $\tilde{M}$ its universal cover, $\tilde{M}(\infty)$ the boundary at infinity.
\item $T^1M$ and $T^1\tilde{M}$: the unit tangent bundles to $M$ and $\tilde{M}$, respectively.
\item $St_kM$: the $k$-frame bundle of ordered, orthonormal $k$-frames on $M$.
\item $g_t$: the geodesic flow on $T^1M$ or $T^1\tilde{M}$.
\item $F_t$: the frame flow on $St_kM$; when clear, $k$ will not be referenced.
\item $W^s_g$ and $W^u_g$: the foliations of $T^1\tilde{M}$ given by inward and outward pointing normal vectors to horospheres.
\item $\mu$: the Bowen-Margulis measure of maximal entropy on $T^1M$.
\item $\gamma_v (t)$: the geodesic in $M$ or $\tilde{M}$ with velocity $v$ at time 0.
\item $w_v (t)$: a parallel normal vector field along $\gamma_v (t)$ making the distinguished curvature $-a^2$ with $\dot{\gamma}_v (t)$.
\item $K(\cdot, \cdot)$: the sectional curvature operator.
\end{itemize}

Note that $\pi:St_kM \to T^1M$ mapping a frame to its first vector is a fiber bundle with structure group $SO(k-1)$ acting on the right; $St_nM$ is a principal bundle.  The measure $\mu$ is used in place of the standard Liouville measure as it has better known dynamical properties for rank 1, nonpositively curved spaces (see Section \ref{sec:rank1}).  In the negative curvature setting Liouville measure can be used.  Unless otherwise specified, $\mu$ and its product with the standard measure on the fibers of $St_kM$ inherited from the Haar measure on $SO(n-1)$ will be the measures used in all that follows.  In negative curvature, $W^s_g$ and $W^u_g$ are the stable and unstable foliations for the geodesic flow.

\subsection{Background}

In negative curvature, Brin develops the ergodic theory of frame flows as summarized below.  In Section \ref{sec:rank1} we will discuss how suitable portions of this setup can be generalized to the rank 1 setting.

First, the frame flow also gives rise to stable and unstable foliations $W^s_F$ and $W^u_F$ of $St_kM$ as shown by Brin (see \cite{Brin-survey} Prop. 3.2).  Brin notes that the existence of these foliations can be established in two ways, either by applying the work of Brin and Pesin on partially hyperbolic systems or by utilizing the exponential approach of asymptotic geodesics.  In the second approach the leaves of the foliation are constructed explicitly - they sit above the stable/unstable leaves for the geodesic flow, and $\alpha$ and $\alpha'$ are in the same leaf if the distance between $F_t(\alpha)$ and $F_t(\alpha')$ goes to zero as $t \to \infty$ for the stable leaves, or $t \to -\infty$ for the unstable leaves.  Here we present a Proposition that makes possible this definition of $W^*_F(\alpha)$ by establishing that frames asymptotic to $\alpha$ exist and are unique.  The proof follows the sketch given by Brin in \cite{Brin-survey}.

\begin{prop} \label{prop:leaf}
Let $v$ be a unit tangent vector and let $\alpha$ be a $k$-frame with first vector $v$.  Let $v' \in W^s_g(v)$ (respectively $W^u_g(v)$) so that the distance between $g_t(v)$ and $g_t(v')$ goes to zero exponentially fast as $t \to \infty$ (resp. $t \to -\infty$).  Then there exists a unique $k$-frame $\alpha'$ with first vector $v'$ such that the distance between $F_t(\alpha)$ and $F_t(\alpha')$ goes to zero as $t \to \infty$ (resp. $t \to -\infty$). 
\end{prop}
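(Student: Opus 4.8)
The plan is to carry out explicitly the ``exponential approach of asymptotic geodesics'' alluded to above. I work throughout in the universal cover $\tilde{M}$ (the statement is really one about $T^1\tilde{M}$ and $St_k\tilde{M}$, since $W^s_g$ and $W^u_g$ are defined there) and treat the stable case, the unstable one being identical after reversing time. Since $v'\in W^s_g(v)$, the geodesics $\gamma_v$ and $\gamma_{v'}$ are forward asymptotic and $\ell(t):=d(\gamma_v(t),\gamma_{v'}(t))\to 0$ exponentially, by hypothesis.

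\emph{Existence.} For each large $t$ let $c_t$ be the unique minimizing geodesic from $\gamma_v(t)$ to $\gamma_{v'}(t)$ (unique by Cartan--Hadamard) and let $T_t$ be parallel transport along it. The vector $T_t\dot{\gamma}_v(t)$ makes an angle $\theta_t$ with $\dot{\gamma}_{v'}(t)$, and $\theta_t\to 0$ since $d(g_tv,g_tv')\to 0$ and $\ell(t)\to 0$; let $R_t$ be the rotation of $T_{\gamma_{v'}(t)}\tilde{M}$ fixing $\mathrm{span}(T_t\dot{\gamma}_v(t),\dot{\gamma}_{v'}(t))^{\perp}$ and carrying $T_t\dot{\gamma}_v(t)$ to $\dot{\gamma}_{v'}(t)$, so $\|R_t-\mathrm{Id}\|\le C\theta_t$. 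Put $\beta_t:=R_tT_t(F_t\alpha)$, an orthonormal $k$-frame at $\gamma_{v'}(t)$ with first vector $\dot{\gamma}_{v'}(t)$, and $\alpha'_t:=F_{-t}(\beta_t)\in\pi^{-1}(v')$. The key point is that $(\alpha'_t)$ is Cauchy. For $s<t$, flowing $\alpha'_s$ and $\alpha'_t$ forward to time $s$ is an isometry of $\pi^{-1}(v')$ onto $\pi^{-1}(g_sv')$ (all these frames share their first vector along $\gamma_{v'}$), and the two resulting frames $\beta_s$ and $F_{s-t}(\beta_t)$ at $\gamma_{v'}(s)$ are obtained from $F_t\alpha$ by compositions of parallel transports and the rotations $R_s,R_t$ that differ precisely by the parallel-transport holonomy around the geodesic quadrilateral $Q_{s,t}$ with vertices $\gamma_v(s),\gamma_v(t),\gamma_{v'}(t),\gamma_{v'}(s)$; hence
\[
d_{St}(\alpha'_s,\alpha'_t)\ \le\ C\Big(\|R_s-\mathrm{Id}\|+\|R_t-\mathrm{Id}\|+\big\|\mathrm{hol}(Q_{s,t})-\mathrm{Id}\big\|\Big).
\]
Ruling $Q_{s,t}$ by the geodesics $c_u$, $s\le u\le t$, whose $u$-derivatives are Jacobi fields of norm $\le 1$ (convexity, with unit boundary values $\dot{\gamma}_v(u),\dot{\gamma}_{v'}(u)$), shows $\mathrm{Area}(Q_{s,t})\le\int_s^t\ell(u)\,du$, so the standard curvature-integral bound for holonomy together with $|K|\le\Lambda^2$ (finite by compactness of $M$) gives $\|\mathrm{hol}(Q_{s,t})-\mathrm{Id}\|\le\Lambda^2\int_s^{\infty}\ell(u)\,du$. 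As $\ell$ decays exponentially and $\theta_t\to 0$, all three terms tend to $0$, so $\alpha':=\lim_{t\to\infty}\alpha'_t$ exists in $\pi^{-1}(v')$. Running the same estimate with $s$ fixed and $t\to\infty$, and accounting for the footpoints $\gamma_v(s),\gamma_{v'}(s)$ being $\ell(s)$ apart (via $T_s$), yields $d_{St}(F_s\alpha,F_s\alpha')\le C\big(\ell(s)+\theta_s+\Lambda^2\int_s^{\infty}\ell\big)\to 0$, the required asymptoticity.

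\emph{Uniqueness.} If $\alpha',\alpha''\in\pi^{-1}(v')$ both satisfy the conclusion then $d_{St}(F_t\alpha',F_t\alpha'')\to 0$, and both frames lie in the fiber over $g_tv'$, every frame of which has first vector $\dot{\gamma}_{v'}(t)$; thus $F_{-t}$ restricts on that fiber to the isometry of Stiefel manifolds induced by parallel transport along $\gamma_{v'}$ from $t$ to $0$, whence $d_{\mathrm{fiber}}(\alpha',\alpha'')=d_{\mathrm{fiber}}(F_t\alpha',F_t\alpha'')$ for all $t$. Since $M$ is compact, the fibers of $St_k\tilde{M}$ are uniformly embedded compact submanifolds, so $d_{St}(F_t\alpha',F_t\alpha'')\to 0$ forces $d_{\mathrm{fiber}}(F_t\alpha',F_t\alpha'')\to 0$; being independent of $t$ this distance vanishes, so $\alpha'=\alpha''$.

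The main obstacle is the holonomy estimate and its uniformity in $t$ — controlling parallel transport around arbitrarily long but exponentially thin geodesic quadrilaterals — which is exactly where the exponential-convergence hypothesis and the two-sided curvature bounds enter; the remaining steps are bookkeeping with the fiber metrics on $St_k\tilde{M}$.
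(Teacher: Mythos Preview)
Your proof is correct and follows the same overall strategy as the paper---construct candidate frames $\alpha'_t$ by transferring $F_t\alpha$ to the fiber over $g_t v'$ and flowing back by $F_{-t}$, then show the sequence is Cauchy---but the mechanism you use for the Cauchy estimate is genuinely different. The paper simply takes $\alpha'_t=F_{-t}(f(F_t\alpha))$ with $f$ the nearest-frame map, and argues in one line that smoothness of the frame flow together with exponential approach of the base geodesics forces $d(\alpha'_n,\alpha'_{n+1})$ to decay exponentially, whence the tail sum converges. You instead realize the transfer explicitly as parallel transport along the short geodesic $c_t$ followed by a small rotation $R_t$, and then identify the discrepancy between $\alpha'_s$ and $\alpha'_t$ with the holonomy around the geodesic quadrilateral $Q_{s,t}$, which you bound by $\Lambda^2\cdot\mathrm{Area}(Q_{s,t})\le\Lambda^2\int_s^t\ell(u)\,du$ using convexity of Jacobi-field norms in nonpositive curvature. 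Your route is more explicit and quantitative: it makes transparent exactly where the exponential-decay hypothesis is consumed (integrability of $\ell$) and where the two-sided curvature bounds enter (the holonomy--area inequality and the convexity of $|J|$), whereas the paper's argument trades this for brevity by appealing to smoothness and compactness without unpacking the constants. Your uniqueness argument, via the fact that $F_t$ acts isometrically on fibers so the fiber distance between two candidates is constant in $t$, is also more detailed than the paper's one-sentence dismissal.
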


Note that in a compact, negatively curved any two asymptotic vectors approach each other exponentially fast so this Proposition allows us to define all leaves of the foliation.  In rank 1 spaces this may no longer be the case; thus we have added exponential approach as a hypothesis to the Proposition as it will be used for the rank 1 case later in the paper.

\begin{proof}
Assume $v' \in W^s_g(v)$; the unstable case is analogous.  Uniqueness of the limit is simple since it is clear that two different frames cannot both approach $\alpha$.  We thus have only to show existence.

For $t$ large enough, $g_t(v)$ and $g_t(v')$ are very close to each other, and then for every frame $\beta$ with first vector $g_t(v)$ there exists a unique frame, call it $f(\beta)$, which minimizes the distance from $\beta$ among frames with first vector $g_t(v')$.  Thus to approximate the unique frame $\alpha'$ we are looking for, consider the frames $\alpha'_t=F_{-t}(f(F_t(\alpha)))$.  We want to show that the $\alpha'_t$ have a limit as $t \to \infty$; this limit will clearly be our $\alpha'$.

Consider the sequence $\alpha'_n$ for $n \in \mathbb{N}$.  Since the frame flow is smooth, by choosing large enough $T$ the difference between the frame flow along the segments $[g_T(v), g_{T+1}(v)]$ and $[g_T(v'), g_{T+1}(v')]$ can be made arbitrarily small, and thus if the $\alpha'_n$ have a limit, it must be a  limit for the $\alpha'_t$.  Again, since the frame flow is smooth and since the distance between the geodesics decreases exponentially fast, the distance $d(\alpha'_n, \alpha'_{n+1})$ goes to zero exponentially fast as well.  Note then that $d(\alpha'_n, \alpha'_m)\leq \sum_{i=n}^{m-1}d(\alpha'_i, \alpha'_{i+1}) \leq \sum_{i=n}^{\infty}d(\alpha'_i, \alpha'_{i+1})$.  As the summands go to zero exponentially fast the last sum shown converges and given $\epsilon >0$ one can pick $n$ so large that this tail sum is less than $\epsilon$.  Then we see that the $\alpha'_n$ form a Cauchy sequence so they have a limit as desired.
\end{proof}

Let $p(v, v')$ be the map from the fiber of $St_kM$ over $v$ to the fiber over $v'$ that takes each $\alpha$ to $\alpha'=\pi^{-1}(v')(\alpha') \cap W^s_F(\alpha)$.  Note that $p(v, v')$ corresponds to a unique isometry between $v^{\perp}$ and $v'^{\perp}$ and commutes with the right action of $SO(k-1)$.  For most of this paper we will consider the maps $p(v, v')$ acting on 2-frames.  One can think of $p(v, v')(\alpha)$ as the result of parallel transporting $\alpha$ along $\gamma_v (t)$ out to the boundary at infinity of $\tilde{M}$ and then back to $v'$ along $\gamma_{v'} (t)$.  If $v'$ and $v$ belong to the same leaf of $W^u_g$ there is similarly an isometry corresponding to parallel translation to the boundary at infinity along $\gamma_{-v}$ and back along $\gamma_{-v'}$.  We will also denote this isometry by $p(v, v')$.  In the spirit of Brin (see \cite{Brin-survey} Defn. 4.4) we define the transitivity group at $v$ as follows:

\begin{defn}
Given any sequence $s=\{v_0, v_1, \ldots , v_k\}$ with $v_0=v, v _k=g_T(v)$ such that each pair $\{v_i, v_{i+1}\}$ lies on the same leaf of $W^s_g$ or $W^u_g$ we have an isomorphism of $v^{\perp}$ given by 
\[I(s) = F_{-T} \circ \prod_{i=0}^{k-1} p(v_i, v_{i+1}).\]
The closure of the group generated by all such isometries is denoted by $H_v$ and is called the transitivity group.
\end{defn}

\noindent The idea of the transitivity group is that it is generated by isometries coming from parallel translation around ideal polygons in $\tilde{M}$ with an even number of sides, such as the one shown in figure \ref{fig:rectangle}.

\begin{figure}
\includegraphics[height=.45\textheight, width=.8\textwidth]{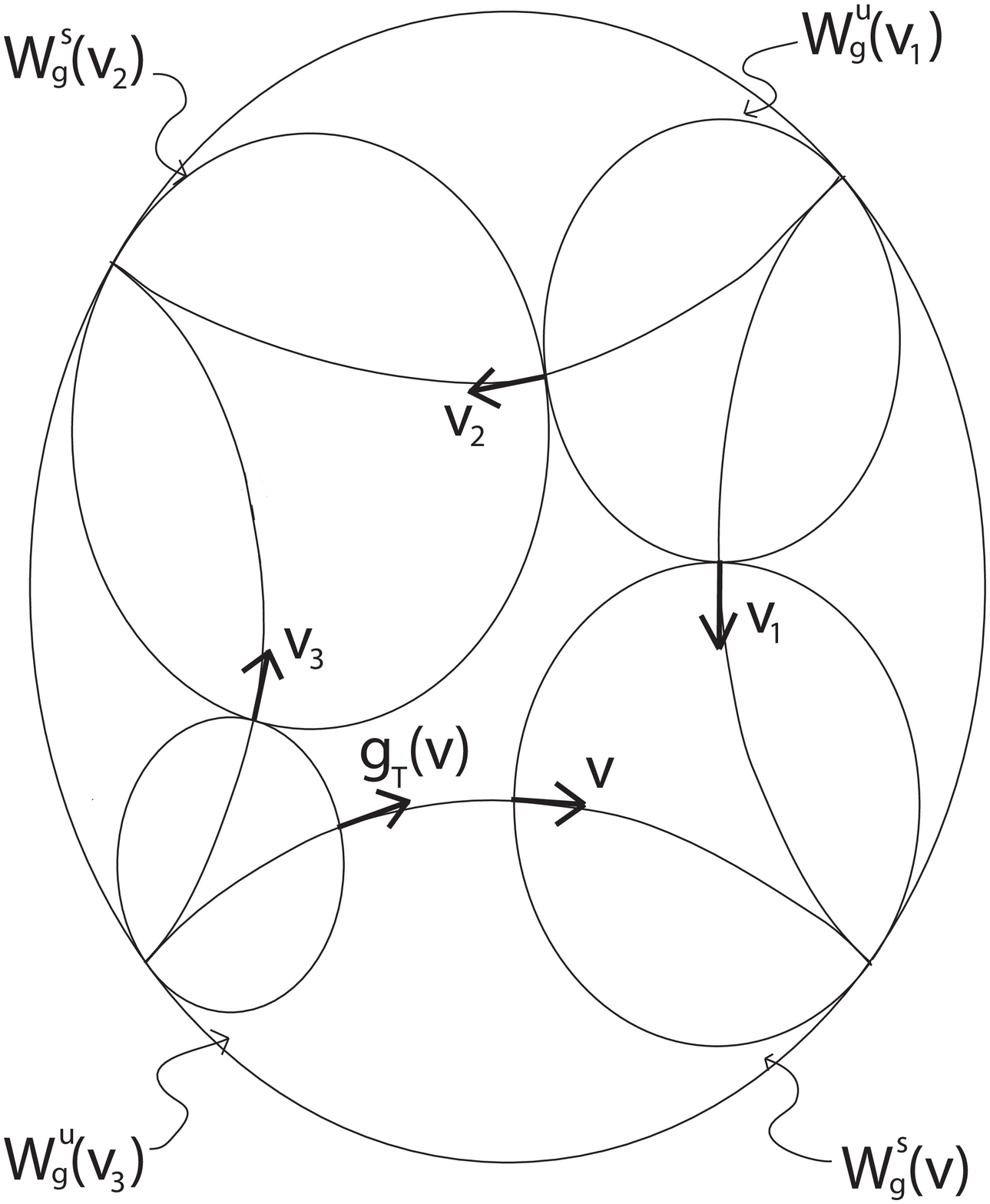}
\caption {An element of $H_v$}\label{fig:rectangle}
\end{figure}

Note that this definition differs slightly from that in Brin's work.  Brin requires that $v_k=v$ and thus there is no $F_{-T}$ term in his formula for $I(s).$  Brin proves that his group describes the ergodic components of the frame flow.  He shows in \cite{Brin-gpext} that the ergodic components are subbundles of $St_kM$ with structure group a closed subgroup of $SO(n-1)$, now acting from the left (see also \cite{Brin-survey} section 5 for an overview).  In addition, his proof demonstrates that the structure group for the ergodic component is the transitivity group (see \cite{Brin-survey} Remark 2 or \cite{Brin-gpext} Proposition 2).  Note that the action of $H_v$ can be taken to be a left action as it commutes with the $SO(k-1)$ action of the structure group.  This can be seen from noting that $p(v, v')(\alpha)\cdot g=p(v, v')(\alpha \cdot g)$ for any $g$ in the structure group, and that these maps define the transitivity group.  The proof that this group gives the ergodic components follows the Hopf argument for ergodicity, showing that the ergodic component is preserved under motion along stable and unstable leaves, and using the Birkhoff ergodic theorem to show that switching from stable to unstable also preserves the component.  

The transitivity group as defined here is certainly at least as large as that defined by Brin.  On the other hand, the addition of the $F_{-T}$ term here certainly preserves the ergodic components so this group still describes ergodic components and therefore is, in the end, the same as Brin's.  The advantage to this formulation of the definition is that it allows all ideal polygons, not just those that are `equilateral' in the sense that they can be traversed only by following leaves of the foliations.  The explicit geometric description of the ergodic components given here is the central inspiration for our proof.

We use two results on the ergodicity of the 2-frame flow in our proof.

\begin{thm} \label{thm:BG}
\emph{(Brin-Gromov \cite{Brin-Gromov} Proposition 4.3)} If $M$ has negative sectional curvature and odd dimension then the 2-frame flow is ergodic.
\end{thm}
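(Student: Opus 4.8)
The plan is to reduce the ergodicity of the 2-frame flow to transitivity of the transitivity group on a single fiber, and then to force that group to be large using the negativity of the curvature. By Brin's structure theory recalled above, the ergodic components of the 2-frame flow on $St_2M$ are subbundles whose structure group, acting on the fiber $\pi^{-1}(v) = S^{n-2}\subset v^{\perp}$, is (a conjugate of) the transitivity group $H_v\subseteq SO(n-1)$; since the geodesic flow is Anosov, hence ergodic, the Hopf-type argument underlying that description shows that the 2-frame flow is ergodic precisely when $H_v$ acts transitively on $S^{n-2}$. So the entire problem becomes: show that the closed --- hence compact Lie --- subgroup $H_v$ of $SO(n-1)$ is transitive on $S^{n-2}$, where I note that $n-1 = \dim v^{\perp}$ is even because $\dim M = n$ is odd.

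First I would record two cheap lower bounds on $H_v$. (i) $H_v$ acts irreducibly on $v^{\perp}$: a proper invariant subspace would, through the description of $H_v$ by parallel transport around ideal polygons together with topological transitivity of the geodesic flow, produce a continuous parallel subbundle of the normal bundle along every geodesic, hence a splitting of the normal Jacobi equation, which is impossible in strictly negative curvature. (ii) For the same reason $H_v$ fixes no nonzero vector (a fixed vector would give a parallel normal field, i.e.\ higher Euclidean rank, excluded by negative curvature). The substantial step is then to exhibit in the Lie algebra $\mathfrak{h}_v\subseteq\mathfrak{so}(n-1)$ a rank-two element, that is, a conjugate of a generator $e_i\wedge e_j$ of a rotation supported on a single $2$-plane. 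Following Brin--Gromov, this is obtained by letting a family of ideal quadrilaterals defining elements of $H_v$ degenerate and computing the leading term of the resulting holonomy: it is a skew-symmetric endomorphism built from the curvature operator $R(\,\cdot\,,\dot\gamma)\dot\gamma$ and the second fundamental forms of the stable and unstable horospheres along the sides, and strict negativity of the curvature lets one arrange that this term is nonzero and, after a further limiting argument, of rank two. Since an irreducible Lie subalgebra of $\mathfrak{so}(n-1)$ that contains a decomposable element $e_i\wedge e_j$ must be all of $\mathfrak{so}(n-1)$, we conclude $\mathfrak{h}_v = \mathfrak{so}(n-1)$, so $H_v\supseteq SO(n-1)$ is transitive on $S^{n-2}$ and the 2-frame flow is ergodic. (If one only controls $H_v$ up to being one of the groups on the Montgomery--Samelson--Borel list of compact groups acting transitively on the odd sphere $S^{n-2}$ --- $SO(n-1)$, $U((n-1)/2)$, $SU((n-1)/2)$, $Sp((n-1)/4)$, $Sp((n-1)/4)\!\cdot U(1)$, $Sp((n-1)/4)\!\cdot Sp(1)$, $Spin(7)$, $Spin(9)$ --- that already gives transitivity; this weaker route is the one that survives when the holonomy information is incomplete, as in the rank $1$ nonpositively curved setting treated later.)

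The hard part will be the curvature computation in the previous paragraph: controlling the first-order holonomy of degenerating ideal polygons in a \emph{variably} negatively curved manifold, where there is no perturbative comparison with the constant-curvature model, and in particular ruling out that the limiting element generates only a proper irreducible subgroup. The other essential inputs are softer: topological transitivity of the Anosov geodesic flow (to globalize the local holonomy data and to run the Hopf argument) and strict negativity of the curvature (to get exponential approach of asymptotic geodesics, so that the stable and unstable frame foliations are well defined via Proposition \ref{prop:leaf}, and to exclude parallel normal fields).
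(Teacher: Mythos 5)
Your opening reduction --- ergodicity of the 2-frame flow is equivalent to transitivity of the Brin group $H_v$ on the fiber $S^{n-2}$ --- is correct and is also how the paper sets things up. But the route you take to transitivity has genuine gaps, and, tellingly, it never actually uses the hypothesis that $\dim M$ is odd in an essential way. Your step (i) is false as stated: strict negative curvature does \emph{not} rule out a parallel splitting of the normal bundle along every geodesic. On a compact quotient of complex hyperbolic space the field $J\dot\gamma$ is parallel along every geodesic, the normal bundle splits as $\mathbb{R}J\dot\gamma\oplus(J\dot\gamma)^{\perp}$, and the 2-frame flow is \emph{not} ergodic --- which is consistent only because $\dim M$ is even there. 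Since nothing in your argument distinguishes odd from even $\dim M$, it would prove a false statement. Your Lie-algebra lemma is also false: $\mathfrak{u}(k)\subset\mathfrak{so}(2k)$ acts irreducibly on $\mathbb{R}^{2k}$ and contains the decomposable element $e_1\wedge Je_1$ (the image of $\mathrm{diag}(i,0,\dots,0)$), yet is a proper subalgebra --- and $n-1=2k$ even is exactly the case at hand. Finally, the conclusion you aim for, $H_v=SO(n-1)$, is equivalent to ergodicity of the \emph{full} frame flow, which is precisely what is open in dimension $7$; this is the entire reason the paper works with the 2-frame flow rather than the $(n-1)$-frame flow. No argument of the kind you sketch can be expected to close without pinching hypotheses (that is the Brin--Karcher route, Theorem \ref{thm:BK}).

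The actual Brin--Gromov proof, reproduced in adapted form in the paper's proposition that $H$ acts transitively on the fiber of 2-frames, is a soft topological argument and is where odd-dimensionality enters. The ergodic component containing a frame $\alpha$ is a subbundle $Q(\alpha)\subseteq St_2M$; restricting over a point $p\in M$ gives a subfibration of $St_2M|_p\to S^{n-1}$ (the unit tangent bundle of the fiber sphere $S^{n-1}=T^1_pM$) with fiber $H/K\subseteq S^{n-2}$. If $H$ were not transitive, $H/K$ would miss a point of $S^{n-2}$, so the inclusion $i:H/K\hookrightarrow S^{n-2}$ would be trivial on $\pi_{n-2}$; comparing the homotopy long exact sequences of the two fibrations forces the connecting map $\partial:\pi_{n-1}(S^{n-1})\to\pi_{n-2}(S^{n-2})$ to vanish, so the identity class lifts and the bundle $St_2M|_p\to S^{n-1}$ admits a section, i.e.\ a nonvanishing vector field on $S^{n-1}$. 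Since $n$ is odd, $S^{n-1}$ is an even-dimensional sphere and this contradicts the hairy ball theorem. If you want to salvage your proposal, this is the missing idea you need: the obstruction is the Euler class of the even sphere, not any Lie-theoretic lower bound on $H_v$.
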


\begin{thm} \label{thm:BK}
\emph{(Brin-Karcher \cite{Brin-Karcher})} If $M$ has sectional curvature satisfying $-\Lambda^2 < K < -\lambda^2$ with $\lambda / \Lambda > .93$ then the 2-frame flow is ergodic.
\end{thm}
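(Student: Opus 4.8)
The plan is to argue exactly in the group-extension framework set up above. Since $M$ is strictly (pinched) negatively curved, the geodesic flow $g_t$ on $T^1M$ is Anosov, mixing and ergodic, and the $2$-frame flow $F_t$ on $St_2M$ is an isometric extension of it: the bundle $St_2M \to T^1M$ has fiber the unit sphere $S^{n-2}$ of $v^{\perp}$, on which $SO(n-1)$ acts with stabilizer $SO(n-2)$. By the results of Brin recalled above, the ergodic components of $F_t$ are subbundles whose structure group is the transitivity group $H_v \subseteq SO(n-1)$; the component through a frame $(v,w)$ has fiber the $H_v$-orbit of $w$ in $S^{n-2}$. Hence the whole theorem reduces to the purely group-theoretic assertion that the closed subgroup $H_v$ acts \emph{transitively} on $S^{n-2}$, and this is what I would prove (following Brin and Karcher, \cite{Brin-Karcher}).

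First I would fix the vocabulary for producing elements of $H_v$. By definition $H_v$ is the closure of the group generated by the holonomies $I(s)=F_{-T}\circ\prod_i p(v_i,v_{i+1})$ along chains through leaves of $W^s_g$ and $W^u_g$; geometrically these are parallel transports around ideal polygons in $\tilde{M}$ with vertices on $\tilde{M}(\infty)$, as in Figure \ref{fig:rectangle}. The key step is to estimate such a holonomy, viewed as a rotation of $v^{\perp}$. Using Rauch comparison for the Jacobi fields along the sides of the polygon together with the Riccati equation governing the second fundamental forms of the horospheres involved, one brackets the true holonomy between the holonomies of the corresponding ideal polygons in the two constant-curvature models of curvature $-\lambda^2$ and $-\Lambda^2$. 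The pinching hypothesis $\lambda/\Lambda > .93$ is precisely the quantitative threshold keeping these two model holonomies — and hence the true one — close enough that, as one varies the shape of the ideal polygon and (using topological transitivity of $g_t$ to move the closing vector $g_T(v)$ through a dense set, then conjugating) the position of its rotation plane, the elements so produced realize rotations through angles filling a nonempty interval in an open family of $2$-planes of $v^{\perp}$.

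It then remains to conclude transitivity of $H_v$ on $S^{n-2}$ from this rich supply of small rotations. For $n-1\geq 2$, a closed subgroup of $SO(n-1)$ containing rotations in an open set of $2$-planes has open orbits on the connected sphere $S^{n-2}$, hence a single orbit; equivalently, one invokes the classification of transitive closed subgroups of $SO(n-1)$ (Montgomery--Samelson, Borel) and checks that the constructed rotations are incompatible with every proper, intransitive subgroup. The decisive case is $n$ even, so $n-1$ odd: there a single $H_v$-invariant vector — i.e.\ a globally parallel, holonomy-invariant unit normal field — would otherwise be allowed, reducing the structure group to $SO(n-2)$, and it is exactly this reduction that the pinching estimate forbids. (For the $2$-frame flow the exceptional transitive subgroups $G_2\subset SO(7)$, $Spin(7)\subset SO(8)$ and the $U$, $SU$, $Sp$, $Sp\cdot Sp(1)$ families are harmless, since each already acts transitively on its sphere, so one only needs to exclude the genuinely intransitive possibilities.)

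The main obstacle is the comparison estimate of the second paragraph: extracting ergodicity from the comparatively weak pinching $\lambda/\Lambda > .93$ demands sharp two-sided Jacobi-field and Riccati bounds and careful bookkeeping of how the holonomy rotation angle depends on the side lengths and interior angles of the ideal polygon — it is the requirement that this dependence stay nondegenerate, so that the swept-out set of rotations is genuinely open rather than collapsing to a single value, that pins down the constant. A secondary technical point is carrying the argument out uniformly across the low dimensions in which $SO(n-1)$ admits more than one transitive subgroup.
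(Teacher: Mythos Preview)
The paper does not give a proof of this statement at all: it is quoted as a result of Brin and Karcher, with the explicit remark that it ``is not directly stated as above in \cite{Brin-Karcher}, rather it follows from remarks made in section 2 of that paper together with Proposition 2.9 and the extensive estimates carried out in the later sections.'' So there is nothing in the paper to compare your proposal against; you have produced an outline of a proof where the paper simply cites one.

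As a sketch of what Brin--Karcher actually do, your framing is broadly right (reduce to transitivity of $H_v$ on $S^{n-2}$, produce holonomy elements via parallel transport around ideal configurations, control them by curvature comparison with the two space forms), but the mechanism you describe in the middle is not quite theirs. You phrase the estimate as producing ``rotations through angles filling a nonempty interval in an open family of $2$-planes,'' then deduce transitivity from open orbits. Brin--Karcher instead argue by exclusion: if $H_v$ were a proper closed subgroup of $SO(n-1)$, it would leave invariant some specific structure on $v^{\perp}$ (a unit vector, a $2$-form, a quaternionic structure, etc., depending on which subgroup). Such a structure would then be preserved under the $p(v,v')$ maps, yielding a global parallel object along asymptotic geodesics. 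Their comparison estimates bound how far parallel transport in $M$ along a pair of asymptotic geodesics can differ from the constant-curvature model, and the threshold $.93$ is exactly where this bound becomes small enough to force any such invariant structure to agree with itself under a transport that visibly moves it --- a contradiction. So the pinching is used to \emph{rule out} invariant objects one subgroup at a time, not to sweep out an open set of rotations directly. Your version is heuristically plausible but would need a different (and not obviously available) nondegeneracy argument to make the ``open set of rotations'' claim rigorous.
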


Theorem \ref{thm:BK} is not directly stated as above in \cite{Brin-Karcher}, rather it follows from remarks made in section 2 of that paper together with Proposition 2.9 and the extensive estimates carried out in the later sections.

\subsection{A dynamical lemma}

The following dynamical lemma is one of the main tools in our proof.  It will be used in the proof of Lemma \ref{lemma:existence} and in the arguments of Section \ref{sec:trans}.

\begin{lemma} \label{recurrencelemma}
Suppose $\gamma(t)$ is a recurrent geodesic in $M$ with a parallel normal field $P(t)$ along it such that $K(P(t), \dot{\gamma}(t)) \rightarrow C$ as $t \rightarrow \infty$.  Then $K(P(t), \dot{\gamma}(t)) \equiv C$ for all $t$.
\end{lemma}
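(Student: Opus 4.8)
The plan is to exploit recurrence of $\gamma$ together with continuity of sectional curvature on the (compact) unit Grassmann-2-plane bundle to show that the limiting value $C$ is in fact attained along the whole geodesic, and then to show that it cannot be attained at an isolated parameter unless it is attained everywhere — i.e. that $t \mapsto K(P(t),\dot\gamma(t))$ is constant once it touches its own limit. First I would set $f(t) = K(P(t),\dot\gamma(t))$, a smooth function on $\mathbb{R}$, and use compactness of $M$ (hence of $St_2 M$, or more precisely the bundle of orthonormal $2$-frames) to observe that recurrence of $\gamma$ in $T^1M$ lifts to recurrence of the $2$-frame $(\dot\gamma(t),P(t)/|P(t)|)$: since $P$ is parallel it has constant norm, so normalizing it gives a genuine point of $St_2M$ moving under the $2$-frame flow $F_t$, and $\gamma$ recurrent means there is a sequence $t_n \to \infty$ with $F_{t_n}(\alpha_0) \to \alpha_0$ where $\alpha_0 = (\dot\gamma(0), P(0)/|P(0)|)$. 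By continuity of $K$ on the Grassmannian bundle this forces $f(t_n) \to f(0)$, and since $f(t_n)\to C$ by hypothesis we get $f(0) = C$. Running the same argument with any base time $s$ in place of $0$ — using that $\gamma$ restricted to $[s,\infty)$ is still recurrent, or simply that the recurrence times can be chosen so that $F_{t_n}(\alpha_s)\to\alpha_s$ — gives $f(s) = C$ for every $s$, which is the claim.

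The one point needing care is the passage "$\gamma$ recurrent $\Rightarrow$ the lifted $2$-frame is recurrent." Recurrence of $\gamma$ only gives return of the base point $\dot\gamma(t)$ in $T^1M$, not a priori of the frame $(\dot\gamma(t),P(t))$ in $St_2M$; the fiber $SO(n-1)$ direction could drift. The fix is that we do not actually need the frame itself to return — we only need some sequence $t_n\to\infty$ along which $f(t_n)\to f(0)$, and for that it suffices that $\dot\gamma(t_n)\to\dot\gamma(0)$ together with $f(t_n)$ converging to something; but $f(t_n)$ automatically converges to $C$ by hypothesis, so we must instead get $f(0)$ as a subsequential limit of $\{f(t_n)\}$. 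Here is the clean route: pass to the full $2$-frame bundle, which is compact, so the orbit $\{F_t(\alpha_0) : t \ge 0\}$ has at least one $\omega$-limit point $\beta$; its first vector is an $\omega$-limit point of $\dot\gamma$, and by continuity $K$ evaluated on the plane of $\beta$ equals $\lim f(t_n) = C$ along the relevant sequence. Then to conclude $f(0)=C$ I use recurrence of $\gamma$ \emph{in $T^1M$} to arrange that the geodesic comes back near $\dot\gamma(0)$ at times $t_n$; at such times $f(t_n)$ is close to $f(0)$ (continuity of $K$ in the $2$-plane, using that $P(t_n)$, being parallel with fixed norm, makes the $2$-plane spanned depend continuously on the data — this is the step that genuinely uses parallelism, preventing the normal vector from collapsing onto $\dot\gamma$), while $f(t_n)\to C$; hence $f(0)=C$.

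I expect the main obstacle to be exactly this continuity-under-recurrence step: making precise that the sectional curvature of the $2$-plane $\mathrm{span}(\dot\gamma(t_n),P(t_n))$ converges to that of $\mathrm{span}(\dot\gamma(0),P(0))$ when $\dot\gamma(t_n)\to\dot\gamma(0)$. One cannot read this off from convergence of the base vectors alone; one needs to control $P(t_n)$ as well. The way I would handle it is to lift everything to $\tilde M$ and transport $P$ along $\gamma$: since $P$ is parallel, in a parallel orthonormal framing along $\gamma$ its coordinates are constant, so after applying a recurrence deck transformation $\phi_n$ (with $d\phi_n(\dot\gamma(t_n)) \to \dot\gamma(0)$) the vectors $d\phi_n(P(t_n))$ lie in a fixed compact set (sphere of radius $|P(0)|$) and subconverge to some unit-normalized vector $Q$; then $\lim f(t_n) = K(Q,\dot\gamma(0))$. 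Since $\lim f(t_n) = C$, the $2$-plane $\mathrm{span}(Q,\dot\gamma(0))$ has curvature $C$. It remains only to note that this argument applies verbatim with $0$ replaced by any $s$, and — if one wants $f \equiv C$ literally rather than just $f(s)=C$ for a dense set — that $f$ is continuous, so $f\equiv C$. This last upgrade is immediate. The heart of the proof is the compactness-plus-recurrence argument; everything else is routine.
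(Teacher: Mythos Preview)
There is a genuine gap in your argument. You correctly identify the danger that recurrence of $\dot\gamma$ in $T^1M$ does not force recurrence of the 2-frame $(\dot\gamma, P)$ in $St_2M$ --- the $SO(n-1)$ fiber direction can drift --- but your proposed fix does not close this gap. In your final paragraph you pass to a subsequential limit $d\phi_n(P(t_n)) \to Q$ and correctly conclude $K(Q, \dot\gamma(0)) = C$; but $Q$ need not equal $P(0)$, so this does \emph{not} give $f(0) = C$. Your claim that ``this argument applies verbatim with $0$ replaced by any $s$'' to obtain $f(s) = C$ has the same defect: at each $s$ you only produce some vector $Q_s \in \dot\gamma(s)^\perp$ with $K(Q_s, \dot\gamma(s)) = C$, not $K(P(s), \dot\gamma(s)) = C$. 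The earlier assertion in your second paragraph that ``$f(t_n)$ is close to $f(0)$'' by continuity is simply unjustified without knowing $P(t_n) \to P(0)$; parallelism controls the norm and orthogonality of $P(t_n)$ but says nothing about its direction within the fiber over $\dot\gamma(0)$.

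The paper's proof fills exactly this gap by iterating your construction. From $P$ one obtains a limit parallel field $G$ along $\gamma$ (your $Q$, extended by parallel transport) with $K(G(t), \dot\gamma(t)) \equiv C$ for all $t$ --- this much you essentially have. Repeating the recurrence-and-limit step with $G$ in place of $P$ yields $G_1$, then $G_2$, and so on, each making curvature identically $C$. The key observation is that $G(0) = P(0) \cdot g$ for a fixed $g \in SO(n-1)$ determined by the recurrence sequence (read off from an $n$-frame orbit), whence $G_i(0) = P(0) \cdot g^{i+1}$. Compactness of $SO(n-1)$ then furnishes a subsequence with $g^{i_j+1} \to id$, so $G_{i_j} \to P$ and therefore $K(P(t), \dot\gamma(t)) \equiv C$. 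This iteration-and-return step through the compact structure group is precisely the idea missing from your plan.
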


\begin{proof}
Since $\gamma(t)$ is recurrent we can take an increasing sequence $\{ t_k\} $ tending to infinity such that $\dot{\gamma}(t_k)$ approaches $\dot{\gamma}(0)$.  Since the parallel field $P(t)$ has constant norm and the set of vectors in $\dot{\gamma}_v(t)^{\perp}$ with this norm is compact, we can, by passing to a subsequence, assume that $P(t_k)$ has a limit $G(0)$.  Extend $G(0)$ to a parallel vector field $G(t)$ along $\gamma(t)$.

By construction, $K(G(0), \dot{\gamma}(0)) = \lim_{k \rightarrow \infty} K(P(t_k), \dot{\gamma}(t_k)) = C$.  In addition, for any real number $T$, the recurrence $\dot{\gamma}(t_k) \rightarrow \dot{\gamma}(0)$ implies recurrence $\dot{\gamma}(t_k+T) \rightarrow \dot{\gamma}(T)$.  By continuity of the frame flow, we get that $P(t_k+T) \rightarrow G(T)$ for the vector field $G$ defined above.  Thus $G(t)$ makes curvature $C$ with $\dot{\gamma}(t)$ for any time $t$.

We can repeat the same argument as above, letting $G(t)$ recur along the same sequence of times to produce $G_1(t)$, and likewise $G_i(t)$ recur to produce $G_{i+1}(t)$, forming a sequence of fields all making curvature identically $C$ with the geodesic direction.  Now, observe that $G(0) = P(0)\cdot g$ for some $g\in SO(n-1)$.  Note here that $g$ is not well-defined by looking at $P$ and $G$ alone, but will be well defined if we consider $n$-frame orbits with second vector $P$ recurring to $n$-frames with second vector $G(0)$; this is the $g$ we utilize.  By construction and the fact that the $SO(n-1)$ 
action commutes with parallel translation, $G_i(0)=P(0)\cdot g^{i+1}$.  $SO(n-1)$ is compact, so the $\{g^i\}$ have convergent subsequences.  In addition, since the terms of this sequence are all iterates of a single element, we can, by adjusting terms of such a subsequence by suitable negative powers of $g$, have the subsequence converge to the identity.  Choose a subsequence $\{i_j\}$ such that $g^{i_j+1} \rightarrow id$ as $j \rightarrow \infty$.  These $G_{i_j}(t)$ approach our original field $P(t)$ showing that $P$ makes constant curvature $C$ with $\dot{\gamma}$ as well.
\end{proof}


\section{Extensions to rank 1 spaces} \label{sec:rank1}

In this section we discuss some details of the extension to rank 1, nonpositively curved spaces, and note 
how the necessary results on the dynamics of the frame flow can be appropriated to this situation.

\subsection{The measure of maximal entropy}
First we discuss the measure of maximal entropy $\mu$.  This measure was developed for rank 1 spaces by Knieper \cite{Knieper} and is constructed there as follows.  Let $\{\nu_p\}_{p\in\tilde M}$ be the Patterson-Sullivan measures on $\tilde M(\infty)$.  Fix any $p\in \tilde M$.  Let $\mathcal{G}^E$ be the set of pairs $(\xi, \eta)$ in $\tilde M(\infty)$ that can be connected by a geodesic.  Then $d\bar\mu (\xi, \eta) = f(\xi, \eta) d\nu_p(\xi)d\nu_p(\eta)$ defines a measure on $\mathcal{G}^E$; $f$ is a positive function which can be chosen to make the measure invariant under $\pi_1(M)$.

Let $P:T^1\tilde M \to \mathcal{G}^E$ be the projection $P(v) = (\gamma_v(-\infty), \gamma_v(\infty))$.  We then get a $g_t$ and $\pi_1$ invariant measure $\mu$ on $T^1\tilde M$ by setting, for any Borel set $A$ of $T^1\tilde M$
$$\mu(A) = \int_{\mathcal{G}^E} vol(\pi(P^{-1}(\xi, \eta) \cap A))d\bar\mu (\xi, \eta),$$
where here $\pi:T^1M\to M$ is the standard projection and $vol$ is the volume element on the submanifolds $P^{-1}(\xi, \eta)$.  

We need three key facts about this measure.  First, $\mu$ is ergodic for the geodesic flow (see \cite{Knieper} Theorem 4.4).  Second, $\mu$ has full support.  This follows from the facts that $\mu$ is supported on the rank 1 vectors (see \cite{Knieper} again) and that the rank 1 vectors are dense in $T^1M$ (see e.g. \cite{Ballmann-axial}).  Third, $\mu$ is absolutely continuous for the foliations $W^s_g$ and $W^u_g$.  Absolute continuity of a measure for a foliation is a way of asking that a Fubini-like property hold for the foliation when integrating with respect to the measure.  In our situation, we can justify Fubini's theorem for this measure and the foliations $W^s_g$ and $W^u_g$ directly.  Up to multiplication by the positive function $f$, $\mu$ is locally a product measure.  Variation of the set $A$ in the $W^s_g$ and $W^u_g$ directions is measured by the $d\nu_p$ measures as well as the part of $vol$ that measures variation normal to the geodesic flow direction when $P^{-1}(\xi, \eta)$ is a flat of dimension greater than one.  The rest of the measure $vol$ measures variation of the set $A$ in the geodesic flow direction.  Combining these measures together as a product measure with the scaling by $f$ yields $\mu$, demonstrating that Fubini-style arguments for this measure hold.

\subsection{The transitivity group}
Next, we want to extend the definition of the transitivity group to rank 1, nonpositively curved spaces spaces.  The central difficulty here is that the distance between asymptotic geodesics may approach a nonzero constant.  Thus it is no longer clear whether foliations like $W^s_F$ and $W^u_F$ can be defined.  We deal with this difficulty by avoiding defining foliations for the frame flow, but still defining maps $p(v, v')$ used to produce a transitivity group.  In Section \ref{sec:trans} we will show that the transitivity group preserves the distinguished parallel fields.  The technical points involved in the definitions of the $p$ maps and the transitivity group are necessary to make that proof work.

First, let $\Omega' \subset \tilde{M}(\infty)$ be the set of endpoints $\xi$ of recurrent, rank 1 geodesics such that almost all geodesics ending at $\xi$ are recurrent.  Since recurrence and rank 1 are full measure conditions for the ergodic measure $\mu$ and $\mu$ is absolutely continuous for $g_t$ this is a full measure set of $\tilde{M}$ for any Patterson-Sullivan measure on $\tilde{M}$.  In particular, it is dense as the Patterson-Sullivan measures have full support.

The first two conditions are placed on $\Omega'$ to allow the proof of the following Lemma; the third will be needed for our work in Section \ref{sec:trans}.

\begin{lemma} \label{lemma:strictlyasymptotic}  There exists a full measure subset $\Omega \subset \Omega'$ such that if $\xi \in \Omega$ then any two geodesics $\gamma_1$ and $\gamma_2$ with $\gamma_1(\infty)=\gamma_2(\infty)=\xi$ are exponentially strictly asymptotic, that is, the distance between them goes to zero exponentially fast as they head towards $\xi$.
\end{lemma}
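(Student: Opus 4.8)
The plan is to take $\Omega$ to be a full measure subset of $\Omega'$ and to split the argument into a qualitative stage (the distance tends to $0$) and a quantitative stage (the rate is exponential); the quantitative stage is the reason one cannot simply set $\Omega=\Omega'$, since topological recurrence alone controls the limit but not the speed.

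\emph{Strict asymptoticity.} Fix $\xi\in\Omega'$, a recurrent rank $1$ geodesic $\gamma_1$ with $\gamma_1(\infty)=\xi$, and an arbitrary geodesic $\gamma_2$ in $\tilde M$ with $\gamma_2(\infty)=\xi$, both parametrized by one normalization of the Busemann function $b_\xi$. Since $\tilde M$ is $\mathrm{CAT}(0)$, $h(t):=d(\gamma_1(t),\gamma_2(t))$ is convex, and it is bounded on $[0,\infty)$ because the geodesics are asymptotic, hence nonincreasing on $\mathbb R$; write $h(t)\to c\ge 0$. Using forward recurrence of $\gamma_1$, choose $t_k\to\infty$ so that (after applying deck transformations) $\dot\gamma_1(t_k)\to\dot\gamma_1(0)$; since $d(\gamma_1(t_k),\gamma_2(t_k))=h(t_k)$ stays bounded, the correspondingly translated $\dot\gamma_2(t_k)$ subconverge to a unit vector $w$, and from $h(t_k+s)\to c$ for every $s$ one gets $d(\gamma_1(s),\gamma_w(s))\equiv c$. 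Thus $\gamma_1$ and $\gamma_w$ are parallel at constant distance $c$, so the Flat Strip Theorem produces a flat strip of width $c$ through $\gamma_1$ --- impossible for $c>0$ since $\gamma_1$ is rank $1$. Hence $c=0$: every geodesic ending at $\xi$ is strictly asymptotic to $\gamma_1$, and the triangle inequality in the $b_\xi$-synchronized parametrizations makes any two geodesics ending at $\xi$ strictly asymptotic to one another.

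\emph{Exponential rate.} The point is that the instantaneous contraction rate of stable Jacobi fields, namely $\lambda_{\min}(U^s(v))$ where $U^s(v)\succeq 0$ is the second fundamental form of the stable horosphere at $v$, is positive on a set of full $\mu$-measure. Indeed, at a recurrent rank $1$ vector $v$ a kernel vector $e$ of $U^s(v)$ extends to a stable Jacobi field $J$ with $J(0)=e$, $J'(0)=0$; convexity and boundedness force $\|J\|$ constant on $[0,\infty)$, so $J$ is parallel there with $K(J,\dot\gamma_v)\equiv 0$, and Lemma \ref{recurrencelemma} applied to the parallel extension of $e$ propagates this to all of $\mathbb R$, yielding a global parallel normal Jacobi field along $\gamma_v$ --- contradicting rank $1$. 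Since $\mu$-a.e.\ vector is recurrent and rank $1$, the measurable bounded function $v\mapsto\lambda_{\min}(U^s(v))$ is $\mu$-a.e.\ strictly positive, so $c_0:=\int\lambda_{\min}(U^s)\,d\mu>0$. Now let $\Omega\subset\Omega'$ be the set of $\xi$ such that a.e.\ geodesic ending at $\xi$ is rank $1$ and generic for the Birkhoff ergodic theorem applied to $(g_t,\mu)$ and the function $\lambda_{\min}\circ U^s$; as with $\Omega'$ this is full measure for the Patterson--Sullivan measures, via the disintegration of $\mu$ over $\tilde M(\infty)$ together with the ergodicity and absolute continuity of $\mu$. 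Fix $\xi\in\Omega$ and such a $\gamma_0$. A perpendicular stable Jacobi field $J$ along $\gamma_0$ satisfies $\tfrac{d}{dt}\log\|J(t)\|\le -\lambda_{\min}(U^s(\dot\gamma_0(t)))$, and Birkhoff gives $\int_{T_1}^{T}\lambda_{\min}(U^s(\dot\gamma_0(t)))\,dt\ge\tfrac{c_0}{2}(T-T_1)$ for $T$ large, so stable Jacobi fields along $\gamma_0$ decay exponentially. To transfer this, given $\gamma'$ with $\gamma'(\infty)=\xi$ pick $T_1$ with $h(T_1)=d(\gamma_0(T_1),\gamma'(T_1))$ small and form the geodesic variation $\sigma_u$, $u\in[0,1]$, with $\sigma_u(T_1)$ on the segment from $\gamma_0(T_1)$ to $\gamma'(T_1)$ and $\sigma_u(\infty)=\xi$; then $\sigma_0=\gamma_0$, $\sigma_1=\gamma'$, and $\|\partial_u\sigma_u(\cdot)\|$ is a stable Jacobi field along $\sigma_u$. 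As stable Jacobi fields do not grow forward, $\|\partial_u\sigma_u(t)\|\le h(T_1)$ for $t\ge T_1$, so the whole variation stays within $h(T_1)$ of $\gamma_0$ for $t\ge T_1$, keeping every $\dot\sigma_u(t)$ in a small neighborhood of $\dot\gamma_0(t)$; using continuity of $U^s$ on the compact $T^1M$, $\lambda_{\min}(U^s(\dot\sigma_u(t)))$ then stays within $c_0/2$ of $\lambda_{\min}(U^s(\dot\gamma_0(t)))$ once $h(T_1)$ is small enough. Integrating along each $\sigma_u$ from $T_1$ and using the Birkhoff bound for $\gamma_0$ gives $\|\partial_u\sigma_u(t)\|\le h(T_1)e^{-c_0(t-T_1)/4}$, hence $d(\gamma_0(t),\gamma'(t))\le\int_0^1\|\partial_u\sigma_u(t)\|\,du\le h(T_1)e^{-c_0(t-T_1)/4}$; comparing $\gamma_1,\gamma_2$ with $\gamma_0$ via the triangle inequality completes the proof.

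\emph{Main obstacle.} The heart of the matter lies entirely in the second stage: establishing that the contraction rate $\lambda_{\min}(U^s)$ is positive $\mu$-a.e.\ --- which Lemma \ref{recurrencelemma} handles cleanly, and which is precisely why recurrence and rank $1$ are built into $\Omega'$ --- and then transferring the resulting (non-uniformly hyperbolic) exponential decay along a single generic $\gamma_0$ to the honest distance to an arbitrary asymptotic $\gamma'$. The transfer rests on continuity of the stable Busemann Hessian $U^s$ on the compact $T^1M$ (standard, and classical in strictly negative curvature) and on the observation that, once the distance is small, the entire geodesic variation between $\gamma_0$ and $\gamma'$ is trapped near $\gamma_0$. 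Stage one, and the passage from ``$\mu$-generic'' to ``a.e.\ geodesic ending at $\xi$'' through the disintegration of $\mu$, are routine.
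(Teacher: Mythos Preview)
Your qualitative stage (strict asymptoticity via the flat strip theorem and recurrence) is essentially the paper's argument. The quantitative stage, however, is genuinely different and worth comparing.

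The paper invokes Pesin theory to locate a single geodesic $c$ whose center bundle is trivial, then uses continuity of the geodesic flow and its derivatives to find a neighborhood of $c([0,T])$ in which all stable Jacobi fields contract by a definite fraction over time $T$; the set $\Omega$ consists of endpoints of geodesics that (by Birkhoff) return to this neighborhood a positive fraction of the time. You instead work globally: you show $\lambda_{\min}(U^s)>0$ $\mu$-a.e.\ directly from rank~1 and recurrence via Lemma~\ref{recurrencelemma}, then apply Birkhoff to the function $\lambda_{\min}(U^s)$ itself and transfer the decay using uniform continuity of $U^s$ on $T^1M$. Your route avoids Pesin theory entirely and yields an explicit rate in terms of $\int\lambda_{\min}(U^s)\,d\mu$; the paper's route is more self-contained in that it uses only continuity of Jacobi fields rather than continuity of the horospherical second fundamental form.

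Two small points. First, your use of Lemma~\ref{recurrencelemma} to get a parallel \emph{Jacobi} field is correct but deserves one more sentence: the lemma as stated only gives $K(P,\dot\gamma)\equiv 0$, and you should note that in nonpositive curvature the operator $X\mapsto R(\dot\gamma,X)\dot\gamma$ is semidefinite on $\dot\gamma^\perp$, so $K(P,\dot\gamma)=0$ forces $R(\dot\gamma,P)\dot\gamma=0$ and hence $P$ is genuinely a Jacobi field. Second, your appeal to continuity of $U^s$ is hedged (``classical in strictly negative curvature''), but you need it in nonpositive curvature; this is indeed standard (horospheres in Hadamard manifolds are $C^2$ with continuously varying second fundamental form, cf.\ Heintze--Im~Hof or Eberlein), and you should state it with confidence. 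The constants in your final estimate need a trivial adjustment (``within $c_0/4$'' rather than ``within $c_0/2$'' to land at rate $c_0/4$), but the argument is sound.
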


\begin{proof}
First we show that the distance between $\gamma_1$ and $\gamma_2$ goes to zero (see also \cite{Knieper} Prop. 4.1).  As $\xi \in \Omega'$ it is the end point of a rank 1 recurrent geodesic, call this geodesic $\gamma_v$.  Suppose $\gamma_1$ and $\gamma_2$ are not strictly asymptotic.  Then $\gamma_v$ is not strictly asymptotic to one of these geodesics, without loss of generality, say $\gamma_1$.  Since $\gamma_v$ is recurrent there exists a sequence $\{\phi_i\}$ of isometries of $\tilde M$ and a sequence of real numbers $\{t_i\}$ tending to infinity such that $\phi_i(g_{t_i}(v)) \to v$ as $i \to \infty$.  Consider the sequences of geodesics $\{\phi_i(\gamma_v)\}$ and $\{\phi_i(\gamma_1)\}$.  By choice of the $\phi_i$ the first sequence converges to $\gamma_v$.  Also, since $\gamma_1$ is asymptotic to $\gamma_v$, after perhaps passing to a subsequence, the second sequence converges to a geodesic, call it $\bar\gamma$.  As $\gamma_v$ and $\gamma_1$ are not \emph{strictly} asymptotic, $\bar\gamma \neq \gamma$, but since they are asymptotic, $\bar \gamma(-\infty)=\gamma_v(-\infty)$ and $\bar\gamma(\infty)=\gamma_v(\infty)$.  Then the flat strip theorem (see \cite{Eberlein-Oneill}) implies that $\gamma_v$ and $\bar\gamma$ bound a totally geodesically embedded flat strip, contradicting the fact that $\gamma_v$ is rank 1.

Now we show that this convergence is exponential for almost all $\xi$.  Since the manifold is rank 1 and the geodesic flow is ergodic for $\mu$, $\mu$ is a hyperbolic measure for $g_t$ (see the supplement on Pesin theory to \cite{H-K}).  Then Pesin theory tells us that there exists a geodesic $c$ whose central foliation consists solely of $\mathbb{R}\dot{c}(0)$.  Then every stable Jacobi field $J$ along $c$ eventually decreases in magnitude.  In fact, if $\{J_i\}$ is a basis of the stable Jacobi fields, we can pick $T\in \mathbb{R}$ large enough that for any $J_i$, $\frac{d}{dt} |J_i(t)| <a<0$ for some real number $a$ a positive fraction of the time $[0,T]$.  As this is true for the basis, it is true for any stable Jacobi field.
Then by continuity of the geodesic flow and its derivatives the Jacobi fields, we can chose a small $\epsilon >0$ and consider the set of all geodesics $d$ with $d(t)\in B_\epsilon(c(t))$ for $t\in [0,T]$ so that we have for all stable Jacobi fields $J'$ along $d$ that $\frac{d}{dt} |J'(t)| <a<0$ a positive fraction of the time $[0, T]$.

Consider pairs of asymptotic geodesics $d_1$ and $d_2$ in this set.  We claim that the distance between the geodesics decreases over the interval $[0, T]$ at least by a fraction bounded away from 0 which is independent of the choice of $d_1$ and $d_2$.  To see this, take $v_s$ to be the shortest path in $W^s_g(\dot{d}_1(0))$ with $v_0=\dot{d}_1(0)$ and $v_1=\dot{d}_2(0)$.  The geodesics $v_s(t)=exp(t v_s)$ give a variation of asymptotic geodesics connecting $d_1$ and $d_2$.  Now, at any $t\in [0, T]$ the distance between $d_1(t)$ and $d_2(t)$ along $W^s_g(\dot{d}_1(0))$ is 
\begin{equation}
D(t)=\int_0^1 \Big\|\frac{D}{ds} v_s(t)\Big\| ds. \label{eqn:D}
\end{equation}
Now $\frac{D}{ds} v_s(t)$ for a fixed $s_0$ is a stable Jacobi field along the geodesic $v_{s_0}(t)$.  By our choice of the $\epsilon$-neighborhood in which $v_s(t)$ lies, we know that we have 
\[\frac{d}{dt} \Big\|\frac{D}{ds} v_s(t)\Big\| <a \]
a positive fraction of the time, independently of our choice of $d_1$ and $d_2$.  Since this is true for all the Jacobi fields in the integrand for Equation \ref{eqn:D} we see that $D$ will decrease over the interval $[0,T]$ by at least by some positive fraction independent of the choice of $d_1$ and $d_2$.  If this is true for the distance $D$ then it is true for the actual distance in $M$ between $d_1(t)$ and $d_2(t)$ so this distance also decreases by a positive fraction.

Now, let $U$ be a small enough neighborhood of $\dot{c}(0)$ in $T^1M$ that all $v\in T^1M$ remain within $\epsilon/2$ of $c([0,T])$ for the next $T$ units of time.  By the Birkhoff ergodic theorem applied to the geodesic flow, there exists a full measure set of $v\in T^1M$ such that $g_t(v)$ returns to $U$ a positive fraction of the time.  Let $\Omega \subset \Omega'$ be the endpoints at infinity of the geodesics generated by such vectors.  We now show that this is the set we are looking for.  Let $d$ be a geodesic asymptotic to $\gamma_v$ for $v\in \Omega$.  We show that it approaches $\gamma_v$ exponentially, and hence any two geodesics asymptotic to $\gamma_v$ approach each other exponentially.  We know from the first part of this proof that the distance between $d$ and $\gamma_v$ goes to zero, so after some finite amount of time the distance between them is less than $\epsilon/2$.  Then for a positive fraction of the time after that we know $\dot \gamma_v$ is in $U$ and hence both $\gamma_v$ and $d$ will spend a positive fraction of the time in the $\epsilon$-neighborhood of $c([0,T])$ discussed above.  As shown in the previous paragraph, during each visit to this neighborhood of $c$ the distance between $d$ and $\gamma_v$ decreases at least by a set fraction.  As this happens a positive fraction of the time as $t$ goes to infinity, we see that $d$ approaches $\gamma_v$ exponentially fast, proving the Lemma.
\end{proof}

Let $v' \in W^s_g(v)$ (the case $v' \in W^u_g(v)$ proceeds in a similar manner).  We make definitions of 
$p(v, v')$ in two cases.  
\\

\noindent \textbf{Case I:} $\gamma_v(\infty) \in \Omega$.  Lemma \ref{lemma:strictlyasymptotic} tells us that we have exponential convergence of the geodesics in question.  Then Proposition \ref{prop:leaf} allows us to define $p(v, v')$ mapping frames over $v$ to frames over $v'$ as in the negative curvature case.  
\\

\noindent \textbf{Case II:} $\gamma_v(\infty) \notin \Omega$.  We define a family of maps $\{p_{\{\xi_n\}}(v, v')\}$ in the following manner.  Let $\gamma_v(\infty)=\xi$.  Consider all sequences of points $\{\xi_n\}$ in $\Omega$ that approach $\xi$ in the sphere topology on $\tilde M(\infty)$.  As noted, $\Omega$ is dense in $\tilde M(\infty)$, so we can find such sequences approaching any $\xi$.  Let $c_n$ and $c'_n$ be the geodesics connecting the footpoints of $v$ and $v'$ to $\xi_n$ such that $c_n(0)$ is the footpoint of $v$ and $\dot{c}'_n(0) \in W^s_g(\dot{c}_n(0))$ (see figure \ref{fig:defn}).  The maps $p(\dot{c}_n(0), \dot{c}'_n(0))$ are defined under Case I.  As $n$ tends to infinity, $\dot{c}_n(0) \to v$ and $\dot{c}'_n(0) \to v'$ so limit points of the maps $\{p(\dot{c}_n(0), \dot{c}'_n(0))\}$ will give maps from frames over $v$ to frames over $v'$.  Let us restrict the allowed sequences $\{\xi_n\}$ to only those for which $\{p(\dot{c}_n(0), \dot{c}'_n(0))\}$ has a unique limit; call that limit $p_{\{\xi_n\}}(v, v')$.  These will be the allowed maps for the second case of the definition.
\\

\begin{figure}
\includegraphics[height=.35\textheight, width=.8\textwidth]{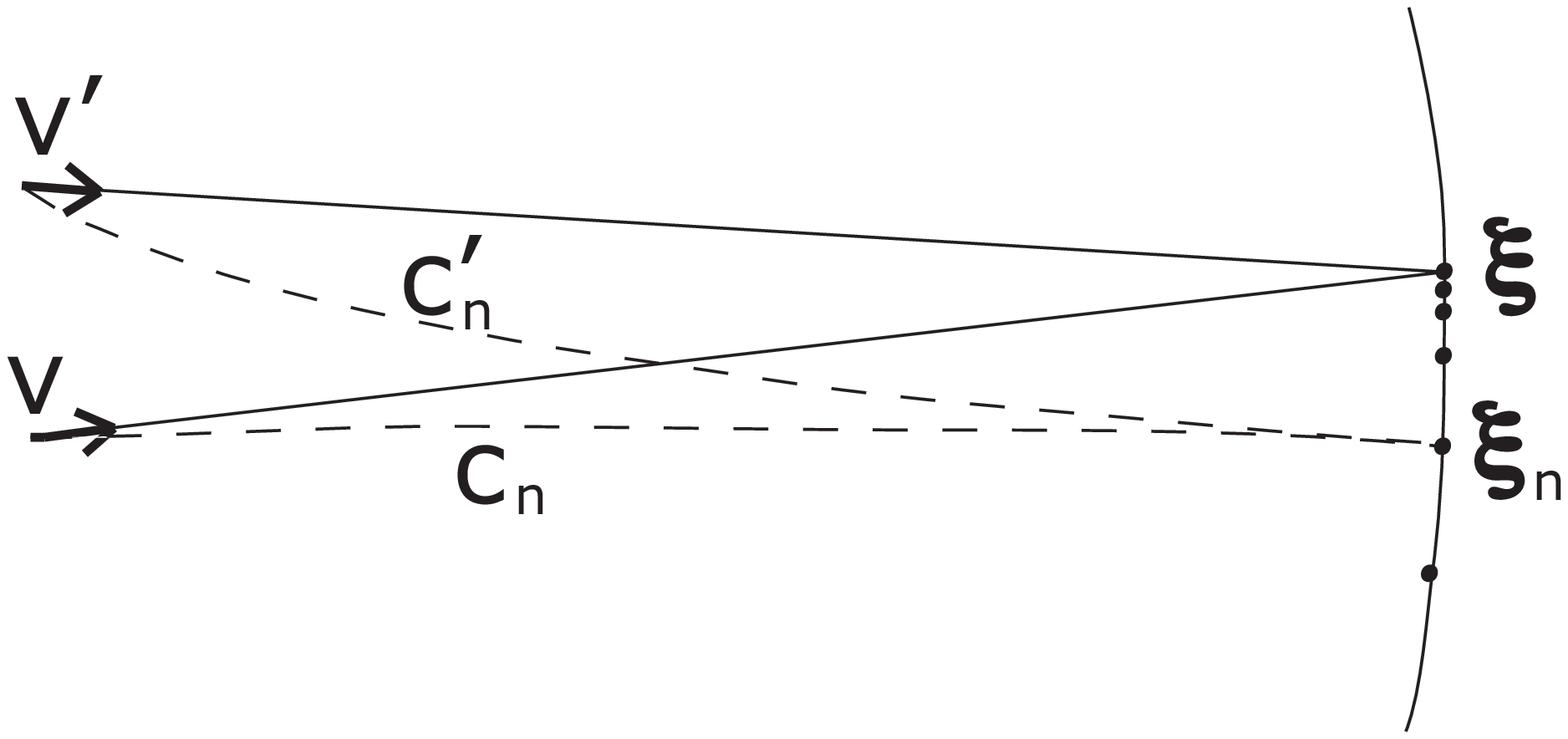}
\caption {adjusting the definition of the transitivity group}\label{fig:defn}
\end{figure}

As before, the transitivity group will be defined as a composition of the $p$ and $p_{\{\xi_n\}}$ maps corresponding to translations around ideal polygons.  Again, nonpositive curvature necessitates some technical considerations.  First, as noted in \cite{Eberlein} 1.11, not all pairs of points on $\tilde{M}(\infty)$ can be connected by geodesics.  However, we note in the following Lemma, that a given point can be connected to almost all other points on $\tilde{M}(\infty)$.

\begin{lemma} \label{lemma:connect} Let $\xi$ be an element of $\tilde{M}(\infty)$ and let $A_\xi$ be the subset of $\tilde{M}(\infty)$ consisting of points that can be connected to $\xi$ by geodesics.  Then $A_\xi$ contains an open, dense set.
\end{lemma}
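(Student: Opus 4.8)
The plan is to produce, for a fixed $\xi \in \tilde M(\infty)$, an open dense subset of the sphere at infinity whose points are all joined to $\xi$ by a geodesic, using two ingredients: the full-measure set $\Omega$ from Lemma \ref{lemma:strictlyasymptotic} (on which asymptotic geodesics are strictly asymptotic), together with a limiting/geodesic-convexity argument. First I would observe that density is essentially for free: since $\mu$ has full support and $\Omega$ is a full-measure (hence dense) subset of $\tilde M(\infty)$, and since any $\eta \in \Omega$ is the endpoint of a rank 1, recurrent geodesic $\gamma$, it suffices to show that a whole neighborhood of each such $\eta$ lies in $A_\xi$. So the real content is the \emph{open} part: rank 1 endpoints have neighborhoods contained in $A_\xi$, and these rank 1 endpoints are dense.

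The key step is the following openness claim: if $\eta \in \tilde M(\infty)$ is the endpoint of a rank 1 geodesic that can be connected to $\xi$, then all $\eta'$ in a suitable sphere-neighborhood of $\eta$ can also be connected to $\xi$. The standard mechanism (going back to Eberlein) is a compactness argument on geodesic segments: fix a basepoint $p$, take a sequence of points $x_k$ on a ray toward $\xi$ and points $y_k$ on rays toward $\eta'$, and let $\sigma_k$ be the geodesic segment from $x_k$ to $y_k$ (these exist since $\tilde M$ is Hadamard). After passing to a subsequence the $\sigma_k$ converge to a complete geodesic $\sigma$; one must check that $\sigma(+\infty) = \xi$ and $\sigma(-\infty)=\eta'$ rather than having the limit ``escape'' to some other pair of boundary points. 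The possible failure — the limiting geodesic connecting the wrong endpoints — is exactly what the rank 1 hypothesis at $\eta$ rules out: if the limit geodesic were asymptotic to the $\eta$-ray but distinct from it, the flat strip theorem would force a flat strip, contradicting rank 1; and since rank 1 is an open condition on $T^1\tilde M$, the same argument persists for $\eta'$ near $\eta$. Quantitatively, one chooses the neighborhood of $\eta$ small enough (in terms of the angle at $p$, say) that the rays toward $\eta'$ stay within the region where the rank 1 estimate of Lemma \ref{lemma:strictlyasymptotic}'s proof applies, so the segments $\sigma_k$ cannot drift away.

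I expect the main obstacle to be precisely this control of the limiting geodesic's endpoints: ruling out that $\sigma_k$ degenerates (its midpoints escaping every compact set) or that $\sigma$ connects $\eta'$ to some $\xi'' \neq \xi$. The cleanest route is to invoke the flat strip theorem together with the openness of the rank 1 condition, as in the first paragraph of the proof of Lemma \ref{lemma:strictlyasymptotic}, rather than re-deriving convergence estimates from scratch; one packages the argument as: rank 1 endpoints form a dense set (by density of rank 1 vectors in $T^1M$, see \cite{Ballmann-axial}), each such endpoint lies in the interior of $A_\xi$ by the flat-strip/compactness argument, and hence $\mathrm{int}(A_\xi)$ is dense, which is the assertion. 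A secondary technical point is checking that the neighborhoods can be taken in the sphere (cone) topology on $\tilde M(\infty)$ uniformly — this follows from continuity of the map sending a boundary point to the initial direction of the ray from $p$, so it is routine once the rank 1 openness is in hand.
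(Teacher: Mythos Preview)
There is a genuine gap in your density argument. You write that density is ``essentially for free'' because $\Omega$ is dense, and then reduce to showing that a neighborhood of each $\eta\in\Omega$ lies in $A_\xi$. But membership in $\Omega$ only says that $\eta$ is an endpoint of \emph{some} rank 1 recurrent geodesic; it does not tell you that $\eta$ can be connected to the given $\xi$ at all. So before you can talk about a neighborhood of $\eta$ inside $A_\xi$, you must first put $\eta$ itself into $A_\xi$, and your compactness argument does not do this. Concretely: in your segment-limiting scheme, the obstruction is not that the limit geodesic $\sigma$ might connect $\xi$ to the \emph{wrong} endpoint, but that the segments $\sigma_k$ may escape every compact set and fail to converge. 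The flat strip theorem is of no help here, because it compares two bi-asymptotic geodesics, and you have not yet produced any geodesic from $\xi$ to $\eta$ to compare against. The rank 1 property you have is attached to a geodesic with endpoints $(\zeta,\eta)$ for some $\zeta$ unrelated to $\xi$, and that gives you no leverage on segments running between the $\xi$-ray and the $\eta$-ray.

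The paper's proof supplies exactly the missing ingredient by invoking Ballmann's Theorem~2.2(iii) from \cite{Ballmann-axial}: for \emph{every} $\xi\in\tilde M(\infty)$ and every endpoint $\eta_0$ of a \emph{periodic} rank 1 geodesic, there is a geodesic (not bounding a flat half plane) from $\xi$ to $\eta_0$. The periodicity is what makes this work --- one uses the axial isometry to push everything toward the attracting fixed point --- and it is not something your recurrence/$\Omega$ hypothesis can replace without further argument. Once one has these $\eta_0$ in $A'_\xi$, density follows because such endpoints are dense (Ballmann's Theorem~2.13), and openness of $A'_\xi$ is Ballmann's Lemma~2.1, whose proof is indeed along the lines of your openness step but takes as hypothesis the existence of a rank 1 connecting geodesic. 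In short: your openness sketch is essentially correct given a rank 1 geodesic from $\xi$ to $\eta$, but the existence of that geodesic is the heart of the lemma, and it needs the periodic-axis argument you have omitted.
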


\begin{proof}
This Lemma simply works out some consequences of \cite{Ballmann-axial}.  Let $A'_\xi \subset A_\xi$ be the set of all points at infinity which can be connected to $\xi$ by a geodesic that does not bound a flat half plane.  Ballmann's Theorem 2.2 (iii) tells us that $A'_\xi$ contains all endpoints of periodic geodesics that do not bound a flat half plane.  Together with his Theorem 2.13, this  implies that the set $A'_\xi$ is dense.  In addition, Ballmann's Lemma 2.1 implies that $A'_\xi$ is open, proving the Lemma.\end{proof}

We also introduce the following technical criterion:
\\

\noindent \textit{\textbf{Composition Criterion.}}  Let $v$ be in $T^1\tilde{M}$ and let $\xi=\gamma_v(\infty), \eta=\gamma_v(-\infty) \notin \Omega$ be the endpoints at infinity of $\gamma_v$.  Let $\xi_n\to \xi$ and $\eta_n\to \eta$ be sequences in $\Omega$.  We will say the pair $(\{\xi_n\}, \{\eta_n\})$ satisfies the \textit{composition criterion for $v$} if $\eta_n \in A_{\xi_n}$ for all $n$ and $\gamma_n \rightarrow \gamma_{v}$ as $n \rightarrow \infty$, where $\gamma_n$ is the geodesic connecting $\xi_n$ and $\eta_n$.
\\

This criterion will be required of pairs $(\{\xi_n\}, \{\eta_n\})$ if we are to compose the maps $p_{\{\xi_n\}}$ and $p_{\{\eta_n\}}$ in forming elements of the transitivity group.  It will be important in the proof of Proposition \ref{prop:case2}.  We must first, however, establish that, given $\{\xi_n\}$ and $v$, there exist sequences $\{\eta_n\}$ that satisfy the composition criterion for $v$ with $\{\xi_n\}$.  Without this fact the definition we will make of the transitivity group could be vacuous.

\begin{lemma} \label{lemma:existence}
Given $v \in T^1M$ with $\gamma_v(\infty)=\xi, \gamma_v(-\infty)=\eta$ and a sequence $\xi_n \rightarrow \xi$ in $\Omega$, there exists a sequence $\eta_n \rightarrow \eta$ in $\Omega$ such that $\{\xi_n\}$ and $\{\eta_n\}$ satisfy the composition criterion for $v$.
\end{lemma}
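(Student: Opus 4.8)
The plan is to approximate $\eta$ by the backward endpoints of geodesics that already track $\gamma_v$ in forward time, and then to move those endpoints into $\Omega$ using that $\Omega$ is dense and of full measure. Fix the footpoint $p=\gamma_v(0)$ and write $v=\dot\gamma_v(0)$. For each $n$ let $c_n$ be the unique complete geodesic with $c_n(0)=p$ and $c_n(\infty)=\xi_n$. In a Hadamard manifold the ray from a fixed point to a point of $\tilde M(\infty)$ depends continuously on that point, so $\xi_n\to\xi$ forces $\dot c_n(0)\to v$ in $T^1\tilde M$; and since the map $w\mapsto\gamma_w(\pm\infty)$ is continuous on $T^1\tilde M$, we get $c_n\to\gamma_v$ uniformly on compact sets and, in particular, $\eta_n^0:=c_n(-\infty)\to\gamma_v(-\infty)=\eta$. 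Note that $\eta_n^0\in A_{\xi_n}$, as witnessed by $c_n$ itself.

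To obtain the $\eta_n$, recall from Lemma~\ref{lemma:connect} that $A_{\xi_n}$ contains an open dense set $U_n$. Since the Patterson--Sullivan measures have full support, $U_n$ has positive measure; since $\Omega$ has full measure for the Patterson--Sullivan measures, $\Omega\cap U_n$ has full measure in $U_n$, hence is dense in $U_n$, hence is dense in $\tilde M(\infty)$. Thus for each $n$ we may choose $\eta_n\in\Omega\cap U_n$ within distance $1/n$ of $\eta_n^0$ in a metric inducing the sphere topology; then $\eta_n\to\eta$. Because $\eta_n\in U_n\subseteq A_{\xi_n}$, there is a geodesic $\gamma_n$ joining $\xi_n$ and $\eta_n$, so the first requirement of the composition criterion is satisfied, and it remains only to arrange $\gamma_n\to\gamma_v$.

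This last point is the crux, and it is where nonpositive (as opposed to strictly negative) curvature makes the argument delicate: a geodesic joining two ideal points need not depend continuously on those points, and $\gamma_v$ may bound a flat strip, so the connecting geodesic need not be unique. The idea is to require $\eta_n$ close enough to $\eta_n^0=c_n(-\infty)$, and to select $\gamma_n$ among the geodesics from $\xi_n$ to $\eta_n$ so that $\gamma_n$ is forward-asymptotic to $c_n$ and stays within $1/n$ of $c_n$ on the interval $[-n,n]$; since $c_n\to\gamma_v$ uniformly on compacta, this yields $\gamma_n\to\gamma_v$. Carrying this out calls for a comparison-geometry argument: one synchronizes $c_n$ with the candidate connecting geodesics via the Busemann functions centered at $\xi_n$, uses that the distance between two forward-asymptotic geodesics is a convex, eventually non-increasing function of the Busemann parameter to reduce the estimate to a bounded region around $p$, and there controls the displacement in terms of how close $\eta_n$ is to $\eta_n^0$, invoking the comparison inequalities valid in nonpositive curvature together with the fact that $c_n$ is an honest geodesic from $\xi_n$ to $\eta_n^0$; one must also check, via a compactness argument bounding footpoints, that the connecting geodesics do not run off to infinity. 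I expect this step---which replaces the clean continuous dependence of connecting geodesics available in negative curvature---to be the main obstacle, with the flat-strip case requiring particular care.
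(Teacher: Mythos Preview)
Your first two paragraphs are fine: setting up the geodesics $c_n$ through $p$ to $\xi_n$, observing $\eta_n^0:=c_n(-\infty)\to\eta$, and noting that $\Omega\cap A_{\xi_n}$ is dense so that some $\eta_n\in\Omega\cap A_{\xi_n}$ can be chosen near $\eta_n^0$. The gap is in the third paragraph, which you yourself flag as ``the main obstacle.'' You need, for the given $\eta_n$, a connecting geodesic $\gamma_n$ from $\xi_n$ to $\eta_n$ that passes close to $p$; but the sketch you give does not secure this. Convexity of the distance between forward-asymptotic geodesics (even the strict asymptoticity toward $\xi_n$ coming from $\xi_n\in\Omega$) only tells you that $d(\gamma_n(t),c_n(t))$ is nonincreasing toward $\xi_n$; it gives no control at the level of $p$ from information at the \emph{backward} endpoint. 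Closeness of $\eta_n$ to $\eta_n^0$ in the sphere topology does not, in nonpositive curvature, bound $d(\gamma_n(0),p)$: geodesics with nearby backward ideal endpoints can diverge arbitrarily in the interior (think of geodesics interacting with a flat region). Your proposed ``compactness argument bounding footpoints'' is exactly what is at stake, and nothing you have written shows why such a bound holds.

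The paper avoids this difficulty by reversing the order of the choices. Rather than first picking $\eta_n$ near $\eta_n^0$ from the large open set $A_{\xi_n}$ and then trying to force $\gamma_n$ near $p$, it uses Knieper's Lemma~3.5: the projection $pr_{\xi_n}(B_R(p))$ contains an open set $U_n\subset\tilde M(\infty)$, and one chooses $\eta_n\in U_n\cap\Omega\cap A_{\xi_n}$. By the very definition of $pr_{\xi_n}$, any such $\eta_n$ is joined to $\xi_n$ by a geodesic that \emph{passes through} $B_R(p)$, so the convergence $\gamma_n\to\gamma_v$ is automatic once $R=R_n\to 0$. The remaining work is to show that $R_n$ can indeed be shrunk to zero; this is where the hypothesis $\xi_n\in\Omega$ is really used. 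One argues, via Lemma~\ref{lemma:strictlyasymptotic} and Lemma~\ref{recurrencelemma} (with $C=0$), that the geodesic $c_n$ from $p$ to $\xi_n$ is itself rank~1, and then Knieper's proof allows $R$ to be any number exceeding the distance from $p$ to a rank~1 geodesic ending at $\xi_n$---here that distance is zero. This is the key idea your proposal is missing: choose $\eta_n$ from the projected image of a shrinking ball, not from all of $A_{\xi_n}$, so that the connecting geodesic comes with a built-in footpoint near $p$.
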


\begin{proof}
For any $\zeta \in \tilde{M}(\infty)$ let $pr_\zeta: \tilde{M} \to \tilde{M}(\infty)$ be the projection defined by setting $pr_\zeta(y)$ equal to $\gamma(\infty)$ where $\gamma$ is the geodesic with $\gamma(-\infty)=\zeta$ and $\gamma(0)=y$. Let $x$ be the footpoint of $v$ and $B_R(x)$ be the ball of radius $R$ around $x$ in $\tilde{M}$.  

Lemma 3.5 in \cite{Knieper} tells us that given $x$, there exists an $R>0$ such that $pr_{\xi_n}(B_R(x))$ contains an open set $U$ in $\tilde{M}(\infty)$.  Examining Knieper's proof one sees that $R$ can be taken to be any number greater than the distance from $x$ to some rank 1 geodesic $c$ which has an endpoint at $\xi_n$.  Consider the geodesic joining $x$ and $\xi_n$.  Since $\xi_n\in \Omega$, it is the endpoint of a rank 1, recurrent geodesic, call it $\gamma$.  Lemma \ref{lemma:strictlyasymptotic} then implies that the geodesic $c_n$ from $x$ to $\xi_n$ is strictly asymptotic to $\gamma$ and then Lemma \ref{recurrencelemma} can be applied with $C=0$ to show that $c_n$ must be rank 1 as $\gamma$ is.  Thus the rank 1 geodesic $c$ needed for Knieper can be taken to be $c_n$.  It is distance 0 from $x$, therefore $R$ can be taken to be arbitrarily small; in particular, take $R_n=1/2^n$ for $\xi_n$.

For each $R_n$, the open set $U_n$ given to us by Knieper's Lemma contains elements of the set $\Omega \cap A_{\xi_n}$ since $A_{\xi_n}$ is open and dense and $\Omega$ has full measure.  Pick $\eta_n \in U_n \cap \Omega \cap A_{\xi_n}$ to form our sequence $\{\eta_n\}$.  Then the geodesics $\gamma_n$ connecting $\xi_n$ and $\eta_n$ enter $B_{1/2^n}(x)$ for all $n$, and as $\xi_n \to \xi$ and $\eta_n \to \eta$ we must have $\gamma_n \to \gamma_{v}$.  Thus $(\{\xi_n\}, \{\eta_n\})$ satisfies the composition criterion for $v$ as desired.
\end{proof}

The transitivity group is defined via the following two definitions.  We start by defining its action on the frames above one particular vector $v$.

\begin{defn} \label{defn:rk1transgpv}
Let $v\in T^1M$ be such that $\gamma_v(\infty)$ and $\gamma_v(-\infty)$ are in $\Omega$.  Consider any sequence $s=\{v_0, v_1, \ldots , v_k\}$ with $v_0=v, v_k=g_T(v)$ for some real $T$, such that each pair $\{v_i, v_{i+1}\}$ lies on the same leaf of $W^s_g$ or $W^u_g$.  Furthermore, take for each pair $\{v_i, v_{i+1}\}$ falling under Case II a choice of a sequence $\{\xi^i_n\} \subset \Omega$ as described above.  We require that $(\{\xi^i_n\}, \{\xi^{i+1}_n\})$ satisfies the composition criterion for $v_{i+1}$.  Then we have an isomorphism of $v^{\perp}$ given by 
\[I(s) = F_{-T} \circ \prod_{i=0}^{k-1} p_-(v_i, v_{i+1}).\]
Here $p_-(v_i, v_{i+1})=p(v_i, v_{i+1})$ when $\{v_i, v_{i+1}\}$ falls under Case I and $p_-(v_i, v_{i+1})=p_{\{\xi^i_n\}}(v_i, v_{i+1})$ when $\{v_i, v_{i+1}\}$ falls under Case II.  The closure of the group generated by all such isometries is denoted by $H_v$.
\end{defn}

We extend the action of this group to any $w\in T^1M$ by connecting $v$ to $w$ by a segment of an ideal polygon.  To do so we simply need a point $\xi$ in $A_{\gamma_v(\infty)} \cap A_{\gamma_w(\infty)}$.  By Lemma \ref{lemma:connect} this set is open and dense, so in fact we can choose $\xi \in \Omega \cap A_{\gamma_v(\infty)} \cap A_{\gamma_w(\infty)}$.  Let $g$ be the isometry from $v^{\perp}$ to $w^{\perp}$ given by frame flow along the segment connecting $v$ and $w$ via $\xi$.  More specifically, let $v_1$ lie on the geodesic connecting $\gamma_v(\infty)$ and $\xi$ such that $v_1 \in W^s_g(v)$, let $v_2$ lie on the geodesic connecting $\xi$ and $\gamma_w(\infty)$ such that $v_2 \in W^u_g(v_1)$, and let $T\in \mathbb{R}$ be such that $g_T(w) \in W^s_g(v_2)$.  Then let

\begin{equation} \label{eqn:g}
g=F_{-T} \circ p_-(v_2, g_T(w)) \circ p_-(v_1, v_2) \circ p(v, v_1).
\end{equation}

In the negative curvature case, it is clear that $H_w = gH_vg^{-1}$.  Thus we complete the definition  of the transitivity group by making the following

\begin{defn} \label{defn:rk1transgp}
Let $H_w:=gH_vg^{-1}.$
\end{defn}

\begin{rmk}
Note that the choices of $v$ and $\xi$ only affect the group $H_w$ up to multiplication by an element of $H_v$, so the specific choices are not relevant.  In addition, attempting to define elements of $H_w$ for vectors $w$ for which neither endpoint is in $\Omega$ by ideal polygons based at $\gamma_w$ is problematic as, due to the composition criterion, the composition of such elements may not be in the group.  Hence we have define such $H_w$ via $H_v$ where no such issues arise. In the end we have a well defined action of an abstract group $H$ isomorphic to $H_v$ on the frame bundle, which in the negatively curved case essentially reduces to Brin's definition.  Again, as the $p_-(v, v')$ maps constructed here are invariant under elements of the structure group $SO(k-1)$, the action of $H$ commutes with the action of $SO(k-1)$ and thus takes the form of a left action.
\end{rmk}

\subsection{The subbundle given by $H$}

We now construct a subbundle of $St_kM$ for any $k \leq n$ with an action of $H$ on it.

\begin{defn} \label{defn:subbundle}  Given a $k$-frame $\alpha$ based above a vector $v\in \Omega$ let $Q(\alpha)\subseteq St_kM$ be the smallest set containing $\alpha$ and closed under all $h\in H_v$, $F_t$ for all $t$ and all isometries $g$ as in Equation \ref{eqn:g}.
\end{defn}  

\begin{prop} \label{prop:subbundle}
$Q(\alpha)$ is a subbundle of $St_kM$.
\end{prop}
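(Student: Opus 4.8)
The plan is to exhibit $Q(\alpha)$ as a fiber bundle over $T^1M$ by showing that (i) the projection $\pi : Q(\alpha) \to T^1M$ to first vectors is surjective, (ii) the fibers are all copies of a single closed subgroup of $SO(k-1)$ acting from the left, and (iii) local triviality holds over a suitable open cover. The essential point is that $Q(\alpha)$ is, by construction, the orbit of $\alpha$ under the (pseudo-)group generated by the frame flow $F_t$, the connecting isometries $g$ from Equation \ref{eqn:g}, and the transitivity groups $H_v$; so we must argue this orbit meets every fiber $\pi^{-1}(w)$ and does so in a single left-$H$-coset.

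First I would establish surjectivity of $\pi$. Starting from $\alpha$ over $v \in \Omega$, the isometries $g$ of Equation \ref{eqn:g} connect $v$ to an arbitrary $w \in T^1M$ through a point $\xi \in \Omega \cap A_{\gamma_v(\infty)} \cap A_{\gamma_w(\infty)}$, which exists and is dense by Lemma \ref{lemma:connect}; hence $g(\alpha) \in Q(\alpha)$ lies over $w$, giving surjectivity. Next, for local triviality, I would fix $w_0$ and a neighborhood $U$ in $T^1M$, and for each $w \in U$ choose the connecting data ($\xi$, the intermediate vectors $v_1, v_2$, the time $T$) to depend continuously on $w$ — here one uses the openness of $A_\zeta$ (Lemma \ref{lemma:connect}), continuity of the $p$ and $p_-$ maps where defined, and smoothness of the frame flow (Proposition \ref{prop:leaf} shows the asymptotic frames exist and vary continuously). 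This yields a continuous section-like family of isometries $g_w : \pi^{-1}(w_0)\cap Q(\alpha) \to \pi^{-1}(w)\cap Q(\alpha)$, providing the local trivialization $U \times (\pi^{-1}(w_0)\cap Q(\alpha))$.

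Then I would pin down the fiber. Fix the reference fiber $\mathcal{F}_v := \pi^{-1}(v) \cap Q(\alpha)$. By Definition \ref{defn:subbundle}, $\mathcal{F}_v$ contains $\alpha$ and is closed under $H_v$, and any element of $Q(\alpha)$ over $v$ is obtained from $\alpha$ by a composition of $F_t$'s, $g$'s, and $h$'s that returns to the fiber over $v$; since each such composition that begins and ends over $v$ is, by the definition of $I(s)$ and Definition \ref{defn:rk1transgp}, an element of $H_v$ (the $F_{-T}$ terms being built into $I(s)$, and conjugated connecting isometries lying in $H_v$ by Definition \ref{defn:rk1transgp}), we get $\mathcal{F}_v = H_v \cdot \alpha$. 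Because $H_v$ is a closed subgroup of $SO(k-1)$ acting freely (the right $SO(k-1)$-action is free and $H_v$ acts from the left, commuting with it), $\mathcal{F}_v$ is a single left coset, i.e. a homogeneous space $H_v/\{e\}$, of the right dimension; transporting by the $g_w$ shows every fiber is a copy of $H_v$. Combined with surjectivity and local triviality this exhibits $Q(\alpha)$ as a subbundle of $St_kM$ with structure group $H_v$.

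The main obstacle I anticipate is the local triviality / continuity step in the presence of Case II: the maps $p_{\{\xi_n\}}(v,v')$ are defined only as limits along admissible sequences, and one must check that as the base vector $w$ (hence the endpoints $\gamma_w(\pm\infty)$) varies continuously, admissible sequences and their limits can be chosen to vary continuously as well, and that the composition criterion can be maintained — this is where Lemma \ref{lemma:existence} and the density/openness statements must be applied with uniformity in $w$. A secondary subtlety is verifying that the orbit-closure taken in Definition \ref{defn:subbundle} does not enlarge the fiber beyond $H_v \cdot \alpha$ (i.e. that closure and the group-generation commute appropriately), which follows from $H_v$ already being defined as a closed group and $SO(k-1)$ being compact, but should be stated carefully.
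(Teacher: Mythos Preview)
Your proposal has a genuine gap and also diverges significantly from the paper's route.

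\medskip
\noindent\textbf{The freeness error.} You assert that $H_v$ acts freely on $k$-frames and hence that the fiber is $H_v/\{e\}$. This is false in general when $k<n$: $H_v$ is a closed subgroup of $SO(n-1)$ acting on $v^\perp$, and an element of $H_v$ can fix the first $k-1$ vectors of a frame while rotating nontrivially in the orthogonal complement. So the fiber $\mathcal{F}_v = H_v\cdot\alpha$ is a homogeneous space $H_v/K$ for a possibly nontrivial stabilizer $K$, not a copy of $H_v$. The paper handles exactly this: it first lifts $\alpha$ to an $n$-frame $\bar\alpha$, works with $Q(\bar\alpha)\subset St_nM$ where the $H$-action \emph{is} free, and then identifies the stabilizer $K=\{\kappa\in SO(n-k):\bar\alpha\cdot\kappa = h(\kappa)\cdot\bar\alpha \text{ for some } h(\kappa)\in H_v\}$, checks that $K$ is independent of the basepoint in $Q(\bar\alpha)$, and recovers $Q(\alpha)$ as $Q(\bar\alpha)/K$ with fiber $H/K$.

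\medskip
\noindent\textbf{The local triviality approach.} You try to build local trivializations by hand, choosing the connecting data $(\xi,v_1,v_2,T)$ continuously in $w$ and invoking continuity of the $p_-$ maps. You correctly flag this as the main obstacle, and it is a serious one: the Case II maps $p_{\{\xi_n\}}$ are defined only as subsequential limits along admissible sequences subject to the composition criterion, and arranging these choices to vary continuously over an open set in $T^1M$ is not addressed by anything in the paper (neither Lemma \ref{lemma:existence} nor Proposition \ref{prop:leaf} gives the needed uniformity). The paper avoids this entirely. Once it knows $H$ acts on $Q(\bar\alpha)$ with a single orbit type (free action), it invokes Bredon's Theorem 5.8 on compact transformation groups to conclude that $\pi:Q(\bar\alpha)\to T^1M$ is a fiber bundle, with no explicit trivialization needed; the same theorem is applied again after quotienting by $K$. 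This is the key idea you are missing: replace the delicate continuity argument by an orbit-type/slice theorem.
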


\begin{proof}
Since for any $w\in T^1M$ we have found an isometry $g$ as in Definition \ref{defn:rk1transgp}, we see that $\pi(Q(\alpha))=T^1M$.

Let $\bar{\alpha}$ be an extension of the $k$-frame $\alpha$ to an $n$-frame with first $k$ vectors given by $\alpha$.  We first show that $Q(\bar\alpha)$ is a subbundle.  By construction, $Q(\bar\alpha)$ admits an action of $H$, an abstract group isomorphic to $H_v$.  It is clear that $Q(\bar\alpha) \cap \pi^{-1}(w)$ is the $H_w$ orbit of $g(\bar\alpha)$ for any $w\in T^1M$, where $g$ is as in Equation \ref{eqn:g}.   Furthermore, $H$ acts freely on $St_nM$ so all orbit types of this action are the same.  Thus Theorem 5.8 from \cite{Bredon} applies and $\pi: Q(\bar\alpha) \to T^1M$ is a fiber bundle with structure group $H$ as desired.

For $k<n$, embed $SO(n-k)$ into $SO(n-1)$, the structure group for $St_nM$ so that it acts on the last $n-k$ vectors in a given frame.  The map $\bar\pi: St_nM / SO(n-k) = St_kM \to T^1M$ is the subbundle of $k$-frames.  To produce $Q(\alpha)$ we would like to apply the same process to $Q(\bar\alpha)$ but must proceed carefully.  Let 
\[K_{\bar\alpha}=\{\kappa \in SO(n-k) | \bar\alpha \cdot \kappa = h(\kappa) \cdot \bar\alpha \mbox{ for some } h(\kappa)\in H_v\}\] 
where $SO(n-k)$ acts on $\bar\alpha$ via the same embedding.  This is the stabilizer of the first $k$ vectors in $\bar\alpha$ (that is, $\alpha$) in the subgroup of the structure group that preserves the $H_v$-orbit of $\bar\alpha$.  We now examine this stabilizer for any other frame $\bar\alpha'$ in $Q(\bar\alpha)$.  Any such $\bar\alpha'$ takes the form $h'\cdot g \cdot \bar\alpha$ for some $h' \in H_w$ and $g$ as in Eqn. \ref{eqn:g}.  We then compute
\begin{displaymath}\begin{array}{rl}
K_{\bar\alpha'}= & \{\kappa\in SO(n-k) | \bar\alpha' \cdot \kappa = h(\kappa) \cdot \bar\alpha' \mbox{ for some } h(\kappa)\in H_w\}\\
                            = & \{\kappa\in SO(n-k) | h'\cdot g \cdot \bar\alpha \cdot \kappa = h(\kappa) h'\cdot g \cdot \bar\alpha \mbox{ for some } h(\kappa)\in H_w\}.\\
\end{array} \end{displaymath}
But $h'\cdot g \cdot \bar\alpha \cdot \kappa = h(\kappa) h'\cdot g \cdot \bar\alpha$ if and only if $\bar\alpha \cdot \kappa = g^{-1} \cdot (h')^{-1} h(\kappa) h'\cdot g \cdot \bar\alpha$, and $g^{-1} \cdot (h')^{-1} h(\kappa) h'\cdot g$ is an element of $H_v$ so we see that $K_{\bar\alpha'}=K_{\bar\alpha}$ for all $\bar\alpha' \in Q(\bar\alpha)$.  We can refer to this group simply as $K$, and we note that $K \hookrightarrow H$ by $\kappa \mapsto h(\kappa)^{-1}$.  Thus, we can obtain $\pi: Q(\alpha) \to T^1M$ as $\bar\pi: Q(\bar\alpha)/K \to T^1M$.  The fibers of this map are of the form $H/K$ everywhere so again we can apply \cite{Bredon} to see that we have a fibration, as desired.
\end{proof}

\begin{rmk}
We have proved that $Q(\alpha)$ is a topological sub-fiber bundle - nothing has been claimed about smoothness.  $C^1$-smoothness of $Q(\alpha)$ in the negative curvature case is proven by Brin and is key to his proof that $Q(\alpha)$ is the ergodic component containing $\alpha$; here, however, we need only the topological result to appropriate the needed results from Brin-Gromov.  
\end{rmk}

\begin{prop}
The transitivity group $H$ acts transitively on the fiber of 2-frames over any $v\in T^1M$.
\end{prop}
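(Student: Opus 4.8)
**The plan is to reduce the claim to the two ergodicity statements, Theorems \ref{thm:BG} and \ref{thm:BK}, via the structure-group interpretation of the subbundle $Q(\alpha)$.** The key observation is that for the $2$-frame bundle $St_2M$, the fiber over $v$ is exactly $v^\perp$ (the unit sphere in the orthogonal complement), and the structure group $SO(k-1) = SO(1)$ is trivial, so $St_2M \to T^1M$ is itself a principal $SO(n-1)$-bundle under the \emph{left} action of the ambient rotations — but more to the point, by Proposition \ref{prop:subbundle} the set $Q(\alpha)$ is a topological subbundle of $St_2M$ on which $H$ acts, with fiber $H_v \cdot (g\alpha)$ over $w$. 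So $H$ acts transitively on the $2$-frame fiber over every $v$ precisely when $Q(\alpha) = St_2M$, i.e. when the subbundle is everything. Thus I would first reformulate the desired conclusion as: \emph{$Q(\alpha) = St_2M$ for some (equivalently, any) $2$-frame $\alpha$ based over a vector in $\Omega$.}

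**Next I would bring in ergodicity.** In negative curvature, Brin's theory identifies $Q(\alpha)$ (suitably smoothed) with the ergodic component of $\alpha$ under the $2$-frame flow $F_t$ with respect to the natural measure (product of $\mu$ with Haar on the $SO(n-1)$-fibers). By Theorem \ref{thm:BG} (odd dimension, negative curvature) or Theorem \ref{thm:BK} (the $.93$-pinched case), the $2$-frame flow is ergodic, so this ergodic component has full measure; since $Q(\alpha)$ is a closed subbundle (hence its complement is open), a full-measure closed subbundle must be all of $St_2M$. That gives $Q(\alpha)=St_2M$ and hence transitivity of $H$ on each $2$-frame fiber. For the rank $1$ nonpositively curved case, the same conclusion must be extracted without a developed ergodic theory of the frame flow: here I would invoke the fact (assumed available from the Brin–Gromov / Brin–Karcher arguments, as the paper indicates these are "appropriated to the rank 1 case") that the relevant arguments produce a \emph{closed, $F_t$-invariant subbundle with structure group $H$} whose existence as a proper subbundle would contradict the transitivity-group mechanism of their estimates — i.e. the explicit geometric input of those papers shows the only invariant subbundle is trivial. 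Concretely, one checks that the hypotheses of Theorems \ref{thm:BG} and \ref{thm:BK}, which are purely about sectional curvature signs/pinching, are insensitive to the rank $1$ versus strictly negative distinction, and that the subbundle $Q(\alpha)$ constructed in Proposition \ref{prop:subbundle} plays the role their proofs assign to the ergodic component.

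**The main obstacle** is precisely this last transfer step: in the rank $1$ nonpositive setting there is no ergodic component of a frame flow to point to, so one cannot literally cite "the $2$-frame flow is ergodic, hence $Q(\alpha)$ has full measure." Instead one must verify that the estimates underlying Theorems \ref{thm:BG} and \ref{thm:BK} — which are local curvature computations along geodesics and their second-order behavior of parallel frames — go through verbatim to show that the transitivity group $H$ (as redefined in Definitions \ref{defn:rk1transgpv} and \ref{defn:rk1transgp}, using the $p_-$ maps and the composition criterion) already acts transitively on $v^\perp$; the subbundle $Q(\alpha)$ of Proposition \ref{prop:subbundle} is the device that makes "$H$ acts transitively" equivalent to "$Q(\alpha)=St_2M$." I would therefore structure the proof as: (i) note $Q(\alpha) \subseteq St_2M$ is a closed subbundle with fiber the $H_v$-orbit of $g\alpha$; (ii) in the negatively curved case cite ergodicity (Theorem \ref{thm:BG} or \ref{thm:BK}) to conclude this orbit is all of $v^\perp$; (iii) in the rank $1$ case, observe that the curvature estimates of \cite{Brin-Gromov} and \cite{Brin-Karcher} depend only on the curvature bounds and on the $p_-$-map formalism, both of which are available here by the constructions of Section \ref{sec:rank1}, so the same conclusion — transitivity of $H$ on $v^\perp$ — holds.
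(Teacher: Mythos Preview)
Your plan has a genuine gap in the odd-dimensional rank 1 case, and it stems from a misidentification of what the Brin--Gromov argument actually is.

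You treat \cite{Brin-Gromov} and \cite{Brin-Karcher} as parallel ``curvature estimate'' arguments that can be transplanted once the $p_-$-maps are in place. That is accurate for Brin--Karcher (and in the even-dimensional case the pinching hypothesis forces strict negative curvature, so citing ergodicity directly is fine there). But the Brin--Gromov argument for odd dimension is not an analytic estimate at all: it is a purely topological obstruction. Once one has the subbundle $Q(\alpha)\subseteq St_2M$ from Proposition~\ref{prop:subbundle}, one restricts to the fiber $St_2M|_p$ over a single point $p\in M$, obtaining fibrations $H/K \to Q(\alpha)|_p \to S^{n-1}$ and $S^{n-2}\to St_2M|_p \to S^{n-1}$ over the unit tangent sphere. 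Comparing their homotopy long exact sequences via the inclusion, one finds that if $H/K\subsetneq S^{n-2}$ then $i_*:\pi_{n-2}(H/K)\to\pi_{n-2}(S^{n-2})$ vanishes, forcing the boundary map $\partial:\pi_{n-1}(S^{n-1})\to\pi_{n-2}(S^{n-2})$ to vanish, which means $St_2M|_p\to S^{n-1}$ admits a section. That section is a nonvanishing vector field on $S^{n-1}$, impossible when $n$ is odd.

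This is why the argument ``works perfectly well in nonpositive curvature'': it needs no ergodic theory of the frame flow whatsoever, only the existence of $Q(\alpha)$ as a topological subbundle, which is exactly what Proposition~\ref{prop:subbundle} supplies. Your step (iii) --- invoking ergodicity or curvature estimates in the rank 1 setting --- cannot be carried out, because ergodicity of the frame flow has not been established there (the paper says so explicitly), and there are no curvature estimates in \cite{Brin-Gromov} to transplant. The missing idea is the hairy-ball obstruction.
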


\begin{proof}
First, note that when $n$ is even we are restricted to strict negative curvature and this result is Theorem \ref{thm:BK} due to Brin and Karcher.  When $n$ is odd the result is Theorem \ref{thm:BG} due to Brin and Gromov and found in section 4 of \cite{Brin-Gromov}.  They discuss the proof only in the strict negative curvature case, but it works perfectly well in nonpositive curvature.  We have included it here for completeness.

Our work in Proposition \ref{prop:subbundle} produced a subfibration $\pi: Q(\alpha) \to T^1M$ with fiber $H/K$ of the fibration $\pi: St_kM \to T^1M$ with fiber $SO(n-1)/SO(n-k)$.  Let us now restrict our attention to 2-frames, and specifically to $St_2M|_p$, the restriction of the 2-frame bundle to those frames based at a point $p$ of $M$.  We get bundles

\begin{displaymath}
\xymatrix{
H/K \ar[r] & Q(\alpha)|_p/K \ar[dd]^{\pi} & &  S^{n-2} \ar[r] & St_2M|_p \ar[dd]^{\pi} \\
                 &                                                  & \ar@{^{(}->}[r]^{i}  & &  \\
                 & S^{n-1}                                    & &                        & S^{n-1} }
\end{displaymath}
where $S^{n-2}=SO(n-1)/SO(n-2)$ and $S^{n-1}$ is the unit tangent sphere above $p$.  Take $b_0\in  S^{n-1}$ and $x_0 \in \pi^{-1}(b_0) \subset H/K$.  These fibrations, together with the inclusion map $i$ give the following commutative diagram, which connects the homotopy long exact sequences for the fibrations by the induced inclusion map $i_*$ (see \cite{Hatcher} Theorem 4.41):

\begin{displaymath}
\xymatrix{
\pi_{n-1}(Q(\alpha)|_p/K, x_0) \ar@{^{(}->}[d]^{i_*} \ar[r]^{\pi_*} & \pi_{n-1}(S^{n-1}, b_0) \ar[d]^{\cong} \ar[r]^{\bar\partial} & \pi_{n-2}(H/K, x_0)\ar@{^{(}->}[d]^{i_*}  \\
\pi_{n-1}(St_2M|_p, x_0) \ar[r]^{\pi_*} & \pi_{n-1}(S^{n-1}, b_0) \ar[r]^{\partial} & \pi_{n-2}(S^{n-2}, x_0) }
\end{displaymath}
Note that $\partial = i_* \circ \bar\partial$.  Now suppose $H$ does not act transitively on the fiber of two frames over some $v \in T^1M$.  Then $H/K \subsetneq S^{n-2}$ so $i_*=0$ on $\pi_{n-2}(H/K, x_0)$ and thus $\partial = 0$ on $\pi_{n-1}(S^{n-1}, b_0)$.  This implies that the map $\pi$ admits a section, thus giving a nonvanishing vector field on $S^{n-1}$. If $n$ is odd this is a contradiction.
\end{proof}


\section{The transitivity group and distinguished vector fields} \label{sec:trans}

As noted in the Introduction, the transitivity group is crucial to this paper's proof.  In this section we 
show that certain distinguished vector fields $w_v(t)$ along $\gamma_v(t)$ are preserved under the action of the transitivity group and use this result to prove Theorem 1.  Throughout we will utilize our dynamical lemma, Lemma \ref{recurrencelemma} with $C=-a^2$.  Consider, for example, the ideal rectangle defined by $v$, $v_1$, $v_2$ and $v_3$ as pictured in figure \ref{fig:rectangle}.  If $\gamma_{v_1}$ and $\gamma_{v_3}$ are positively recurrent and If $\gamma_v$ and $\gamma_{v_2}$ are negatively recurrent, Lemma \ref{recurrencelemma} implies that the element of $H$ corresponding to this ideal polygon preserves the distinguished fields.  The following arguments show how this idea can be worked out for \emph{all} ideal polygons, first in the negative curvature case and then in the general case.

\subsection{The negative curvature case}

In the negative curvature case the argument of this section is considerably simpler, so we discuss it first.  Consider the situation depicted in figure \ref{fig:rectangle}.  Lemma \ref{recurrencelemma} shows that, when $\gamma_{v_1}$ is recurrent in forward time, the map $p(v, v_1)$ preserves the distinguished vector fields in the sense that it sends a vector from one such field, $w_v(0)$, to a vector from another such field along $\gamma_{v_1}$.  Thus, if in figure 1 we have that $\gamma_{v_1}$ and $\gamma_{v_3}$ are recurrent in positive time and $\gamma_v$ and $\gamma_{v_2}$ are recurrent in negative time, then the element of $H_v$ given by parallel translation around this ideal rectangle will map $w_v(0)$ to another element of $v^{\perp}$ which is in a parallel field along $\gamma_v$ making curvature $-a^2$.  If these sort of recurrence properties held for all `equilateral' ideal polygons based at $v$ we would have that the transitivity group preserves the distinguished vector fields.  We cannot assure that these recurrence properties are always present, but ergodicity of the geodesic flow on $M$ indicates that they will be present almost all the time.  Furthermore, the fact that we have defined elements of the transitivity group using the continuous foliations provided by Brin can be used to argue that elements of the transitivity group depend continuously on the choice of ideal polygon.  Thus the transitivity group will preserve the distinguished vector fields.

\subsection{The general case}

For the general case we need arguments to deal with the problem of pairs of geodesics that are asymptotic but not strictly asymptotic, and the fact that we know longer know we have a continuous foliation.  By assumption, the frame flow preserves the distinguished vector fields.  The only question in terms of how they behave under the action of elements from the transitivity group is how they behave when they are transferred across corners of the ideal polygons.  As Lemma \ref{recurrencelemma} shows, when the geodesics involved are strictly asymptotic and the second geodesic is recurrent, the fields are transferred as desired.  Thus, there are two problems to deal with: when the second geodesic is not recurrent, and when the geodesics are not strictly asymptotic.  The new definition of the transitivity group provides a way to deal with both of these issues.  

First, note that under the Case II of Definition \ref{defn:rk1transgp}, we have defined the maps $p_{\{\xi_n\}}(v,v')$ as limits of the maps from the first case of the definition.  Thus, to show that a distinguished field $w_\gamma$ is preserved by some $p_{\{\xi_n\}}(v,v')$ we need to realize $w_\gamma$ as a limit of distinguished fields along the geodesics $c_n$ used to define the map $p_{\{\xi_n\}}(v,v')$ (see Figure \ref{fig:defn}).  In view of this fact we will work with distinguished fields that can be realized as limits, and ensure that this property of arising as a limit is also preserved by the $p_-(v,v')$ maps.  In particular, we will consider distinguished fields $w_v$ that arise as limits of distinguished fields $w_{c_n}$ along geodesics $c_n$ as depicted in Figure \ref{fig:corner} and show that such a field is transferred by a map $p_-(v,v')$ to a field $w_{v'}$ arising as a limit of fields $w_{d_n}$ along geodesics $d_n$ which connect $\gamma_{v'}(0)$ to $\{\eta_n\} \to \gamma_{v'}(-\infty)$.  If the next corner to be traversed falls under Case II, the sequence $\{\eta_n\}$ is determined by the map $p_{\{\eta_n\}}(v', v'')$; otherwise we are free to take any sequence.  The arguments are slightly different in the two cases so we address them separately:
\\

\begin{figure}
\centering
\includegraphics[height=.3\textheight, width=1\textwidth]{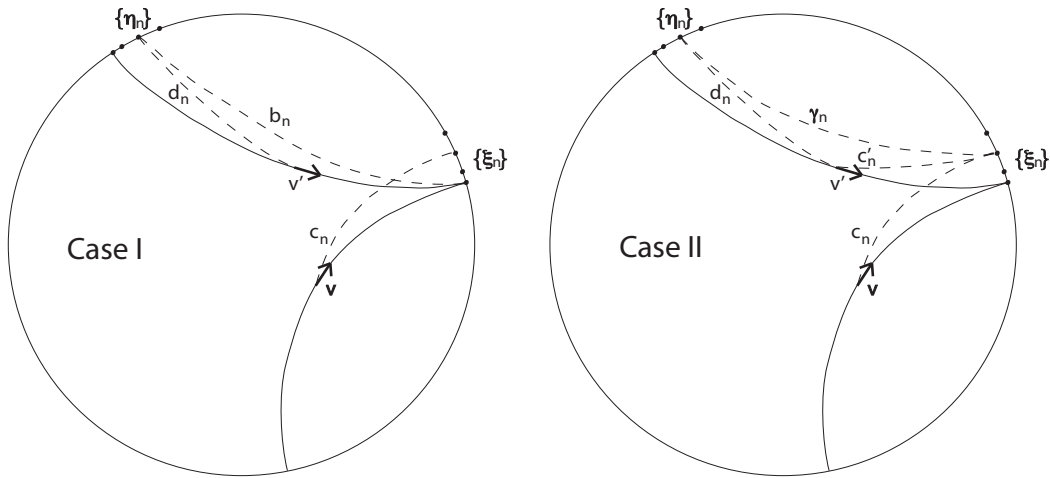}
\caption{geodesic configurations for Prop. 4.1 and 4.2} \label{fig:corner}
\end{figure}

\noindent \textbf{Case I.}  

\begin{prop}
Suppose $v\in T^1M$ falls under Case I, and that $w_v$ is the limit of $w_{c_n}$.  Then for any $v' \in W^s_g(v)$ (respectively $W^u_g(v)$), $w_{v'}:=p(v, v')(w_v)$ is a distinguished field along $\gamma_{v'}$ arising as a limit of distinguished fields $w_{d_n}$.
\end{prop}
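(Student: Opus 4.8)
The plan is to establish the two conclusions at once, the second yielding the first: I will exhibit $w_{v'}:=p(v,v')(w_v)$ as a limit of distinguished fields along \emph{recurrent} geodesics $d_n$ converging to $\gamma_{v'}$, after which $w_{v'}$ is itself distinguished by continuity. I treat $v'\in W^s_g(v)$; the case $v'\in W^u_g(v)$ is obtained by reversing time. The starting point is to record two facts coming from Case I. First, since $v$ falls under Case I, $\xi:=\gamma_v(\infty)=\gamma_{v'}(\infty)\in\Omega$, so by Lemma \ref{lemma:strictlyasymptotic} every geodesic asymptotic to $\gamma_v$ approaches it \emph{exponentially} and $p(v,v')$ is the honest ``parallel transport out to $\xi$ and back'' map supplied by Proposition \ref{prop:leaf}; in particular the $2$-frames $F_t(\dot\gamma_v(0),w_v(0))$ and $F_t(\dot\gamma_{v'}(0),w_{v'}(0))$ stay at distance tending to $0$ as $t\to+\infty$. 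Second, since each $w_{c_n}$ is a distinguished field and $c_n\to\gamma_v$ with $\dot c_n(0)\to v$, continuity of the frame flow and of $K(\cdot,\cdot)$ shows that $w_v$ is distinguished, and combined with the first fact this gives $K(w_{v'}(t),\dot\gamma_{v'}(t))\to-a^2$ as $t\to+\infty$. The same estimate, applied with $v'$ in place of $v$, shows that along the leaf $W^s_g(v')$ the maps $p(v',w)$ vary continuously in $w$ and converge to the identity as $w\to v'$, at a rate governed by the exponential rate in Lemma \ref{lemma:strictlyasymptotic}.

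The next step is to choose the approximating geodesics $d_n$. Because $\xi\in\Omega\subseteq\Omega'$, the defining property of $\Omega'$ says that almost every geodesic ending at $\xi$ is recurrent; equivalently, the set of $\eta\in\tilde M(\infty)$ for which the geodesic from $\eta$ to $\xi$ is recurrent has full Patterson--Sullivan measure, and intersecting with the full-measure set $\Omega$ keeps it dense, in particular dense near $\gamma_{v'}(-\infty)$. If the next corner of the ideal polygon is of Case II, I am required to use the sequence $\{\eta_n\}$ that defines $p_{\{\eta_n\}}(v',v'')$, but that sequence is itself chosen by me from dense families (as in Definition \ref{defn:rk1transgpv}), so I may take it inside this full-measure set; otherwise I pick such an $\eta_n\to\gamma_{v'}(-\infty)$ freely. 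Let $d_n$ be the geodesic with $d_n(\infty)=\xi$ and $d_n(-\infty)=\eta_n$, parametrized so that $\dot d_n(0)\in W^s_g(v')$ and $\dot d_n(0)\to v'$. Then each $d_n$ is recurrent, and $d_n\to\gamma_{v'}$, so the $d_n$ connect $\gamma_{v'}(0)$ to $\eta_n\to\gamma_{v'}(-\infty)$ in the limiting sense of Figure \ref{fig:corner}.

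Finally I would set $w_{d_n}:=p(v',\dot d_n(0))(w_{v'})$, a parallel normal field along $d_n$. Since $p(v',\dot d_n(0))\to\mathrm{id}$ by the continuity noted above, $w_{d_n}\to w_{v'}$. On the other hand $p(v',\dot d_n(0))$ is transport out to $\xi$ and back, and $d_n$ is exponentially strictly asymptotic to $\gamma_{v'}$, so the estimate of the first paragraph gives $K(w_{d_n}(t),\dot d_n(t))\to-a^2$ as $t\to+\infty$. Since $d_n$ is recurrent, Lemma \ref{recurrencelemma} with $C=-a^2$ upgrades this to $K(w_{d_n}(t),\dot d_n(t))\equiv-a^2$, so each $w_{d_n}$ is a distinguished field along $d_n$. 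This furnishes $w_{v'}$ as a limit of distinguished fields of the required form, and one more application of the continuity argument shows $w_{v'}$ itself is a distinguished field along $\gamma_{v'}$.

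The step I expect to be the main obstacle is the compatibility forced on the sequence $\{\eta_n\}$: it must converge to $\gamma_{v'}(-\infty)$, make the geodesics $d_n$ recurrent, and, when the next corner is of Case II, simultaneously be the sequence defining $p_{\{\eta_n\}}(v',v'')$. That all three can be arranged is exactly what the two clauses in the definition of $\Omega'$ (recurrence of a model geodesic to $\xi$, and recurrence of almost every geodesic to $\xi$), the density of $\Omega$, and the latitude built into Definition \ref{defn:rk1transgpv} for choosing the Case II sequences, are there to guarantee; keeping these choices coherent across all corners of the polygon is the delicate bookkeeping. The two analytic inputs (the convergence $K\to-a^2$ at infinity and the basepoint-continuity of the $p$-maps in the Case I regime) are routine, but each genuinely uses the \emph{exponential} (rather than merely asymptotic) approach guaranteed by Lemma \ref{lemma:strictlyasymptotic} and Proposition \ref{prop:leaf}.
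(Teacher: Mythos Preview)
Your route to showing that $w_{v'}$ is distinguished---push the identity $K(w_v,\dot\gamma_v)\equiv -a^2$ across to $K(w_{v'}(t),\dot\gamma_{v'}(t))\to -a^2$ via the exponential convergence at $\xi$, transfer to a recurrent geodesic in $W^s_g(v')$, and invoke Lemma \ref{recurrencelemma}---is valid and somewhat more direct than the paper's. The paper instead argues that $p(v,\,\cdot\,)$ preserves distinguished fields on the dense set of \emph{recurrent} targets $v'_i\in W^s_g(v)$ (density coming from $\xi\in\Omega'$) and then passes to $v'$ by continuity of the $p$-maps; it never needs the intermediate statement $K\to -a^2$.

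There is, however, a genuine gap in the ``arising as a limit'' conclusion: your $d_n$ are not the geodesics the proposition asks for. The paragraph preceding the proposition pins down what ``arising as a limit of $w_{d_n}$'' means: the $d_n$ must emanate from the footpoint $\gamma_{v'}(0)$ and go to $\eta_n$, because when the next corner is Case II these $d_n$ become the geodesics $c_n$ in the very \emph{definition} of $p_{\{\eta_n\}}(v',v'')$, where $c_n(0)$ is required to equal the footpoint exactly. Your $d_n$ instead run from $\eta_n$ to $\xi$, with $\dot d_n(0)$ merely on the horosphere through $\gamma_{v'}(0)$; the phrase ``in the limiting sense of Figure \ref{fig:corner}'' does not bridge this, and Proposition \ref{prop:case2} cannot consume what you have produced. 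Relatedly, you cannot in general arrange your $d_n$ to be recurrent: when the next corner is Case II the sequence $\{\eta_n\}$ is \emph{given} as part of the group element $I(s)$ under consideration, not chosen by you, and nothing forces the geodesic from a given $\eta_n\in\Omega$ to $\xi$ to be recurrent.

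The repair is short and is essentially what the paper does. Your geodesic from $\eta_n$ to $\xi$ is the paper's intermediate geodesic $b_n$; once $w_{b_n}$ is known to be distinguished (either by your asymptotic argument applied to recurrent approximants of $b_n$, or by the paper's density-plus-continuity argument---neither needs $b_n$ itself recurrent), transfer it by the Case I map at the shared backward endpoint $\eta_n\in\Omega$ to the correct $d_n$ through the footpoint of $v'$. One more appeal to the same density argument shows this transferred field is distinguished, and $w_{d_n}\to w_{v'}$ is then immediate. So the obstacle you flag in your final paragraph (compatibility of the sequence $\{\eta_n\}$) is not the real one; the missing move is this extra hop through $b_n$ to land on geodesics that genuinely pass through the footpoint.
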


\begin{proof}
Assume $v' \in W^s_g(v)$; the proof for the unstable case is essentially the same.  Since we are under Case I, $\gamma_{v'}$ is limit of recurrent geodesics $\gamma_{v'_i}$ for $v'_i \in W^s_g(v)$.  The maps $p(v, v'_i)$ preserve distinguished fields as demonstrated in Lemma \ref{recurrencelemma} and $p(v, v'_i) \to p(v, v')$, so $w_{v'}$ will be a distinguished field as well.

Now we need to demonstrate $w_{\gamma_{v'}}$ as a limit in the proper way.  Construct geodesics $d_n$ connecting $v'(0)$ and $\eta_n$ and $b_n$ connecting $\gamma_v(\infty)$ and $\eta_n$ as in Figure \ref{fig:corner}.  The field $w_{b_n}:=p(v, \dot{b}_n(0))(w_v)$ will be a distinguished field by the argument of the previous paragraph.  Likewise, the fields $w_{d_n}:=p(\dot{b}(0)_n, \dot{d}_n(0))(w_{b_n})$ will be distinguished fields as $\eta_n \in \Omega$ falls under Case I.  Since the $w_{b_n} \rightarrow w_{v'}$ it is clear that the $w_{d_n} \rightarrow w_{v'}$ and we are done.
\end{proof}

\noindent \textbf{Case II.} 

\begin{prop}\label{prop:case2}
Suppose $v\in T^1M$ falls under Case II, and that $w_v$ is the limit of $w_{c_n}$.  Then for any $v' \in W^s_g(v)$ (respectively $W^u_g(v)$), $w_{v'}:=p_{\{\xi_n\}}(v, v')(w_v)$ is a distinguished field along $\gamma_{v'}$ arising as a limit of distinguished fields $w_{d_n}$.
\end{prop}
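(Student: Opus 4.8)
The plan is to mimic the structure of the proof in Case I, but now handling two approximation procedures simultaneously: the sequence $c_n$ realizing $w_v$ as a limit (coming from the previous corner) and the sequence $\xi_n \subset \Omega$ defining $p_{\{\xi_n\}}(v,v')$. The key observation is that $p_{\{\xi_n\}}(v,v')$ is by construction the limit of the Case I maps $p(\dot c_n(0), \dot c'_n(0))$, where $c_n$ connects the footpoint of $v$ to $\xi_n$ and $c'_n$ connects the footpoint of $v'$ to $\xi_n$ with $\dot c'_n(0) \in W^s_g(\dot c_n(0))$. So to show $w_{v'}$ is distinguished, it suffices to exhibit it as a limit of distinguished fields along the $c'_n$, and the natural candidate is $w_{c'_n} := p(\dot c_n(0), \dot c'_n(0))(w_{c_n})$. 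Here is where I must be careful: $w_{c_n}$ is the given distinguished field along $c_n$ whose limit is $w_v$, and since $\xi_n \in \Omega$ falls under Case I and $\gamma_{c_n}$ may be taken (or approximated by) recurrent geodesics — indeed Lemma~\ref{lemma:strictlyasymptotic} and Lemma~\ref{recurrencelemma} with $C=-a^2$ show that once we know the endpoint is in $\Omega$ the corner transfer preserves the distinguished property — the map $p(\dot c_n(0), \dot c'_n(0))$ sends the distinguished field $w_{c_n}$ to a distinguished field along $c'_n$. Passing $n \to \infty$, $\dot c_n(0) \to v$, $\dot c'_n(0) \to v'$, $w_{c_n} \to w_v$, and $p(\dot c_n(0), \dot c'_n(0)) \to p_{\{\xi_n\}}(v,v')$, so $w_{c'_n} \to p_{\{\xi_n\}}(v,v')(w_v) = w_{v'}$; hence $w_{v'}$ is a limit of distinguished fields, and by Lemma~\ref{recurrencelemma} (applied along a recurrent geodesic asymptotic to $\gamma_{v'}$, using the third defining property of $\Omega'$) it is itself distinguished.

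The second half of the argument must then re-present $w_{v'}$ as a limit of distinguished fields $w_{d_n}$ along geodesics $d_n$ joining $\gamma_{v'}(0)$ to a sequence $\eta_n \to \gamma_{v'}(-\infty)$, where $\{\eta_n\}$ is the sequence prescribed by the next corner (if that corner is Case II) or arbitrary (if Case I). The plan here is to use a diagonal construction: for each fixed $n$, the field $w_{v'}$ is already approximated by $w_{c'_m}$ for $m$ large; and for a geodesic $c'_m$ with endpoint $\xi_m \in \Omega \subset \Omega'$ (so almost every geodester to $\xi_m$ is recurrent and Case I applies at every relevant footpoint), I can transfer $w_{c'_m}$ across a corner at $\gamma_{v'}(-\infty)$-side to build fields $w_{d^m_k}$ along geodesics $d^m_k$ running to $\eta_k$, exactly as in the Case I proposition — each such transfer preserves the distinguished property because all endpoints involved lie in $\Omega$. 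Choosing $m = m(n)$ growing suitably fast and $k = n$, the resulting diagonal sequence $d_n := d^{m(n)}_n$ satisfies $\dot d_n(0) \to v'$ (the footpoints are all $\gamma_{v'}(0)$) and $d_n(\infty) = \eta_n \to \gamma_{v'}(-\infty)$, while $w_{d_n} \to w_{v'}$ because $w_{c'_m} \to w_{v'}$ and the corner-transfer maps converge to the identity-type limit as the relevant vectors converge. This gives $w_{v'}$ as the desired limit.

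The main obstacle I anticipate is the convergence bookkeeping in the diagonalization: I need uniform control showing that $w_{d^m_k} \to w_{c'_m}$ as $k \to \infty$ for each fixed $m$, and that this convergence is controlled well enough (together with $w_{c'_m} \to w_{v'}$) to extract a single sequence $d_n$ with $w_{d_n} \to w_{v'}$. This requires that the maps $p(\cdot,\cdot)$ and their limits $p_{\{\xi_n\}}(\cdot,\cdot)$ depend continuously on their arguments in a neighborhood of the configuration at $v'$, which follows from Proposition~\ref{prop:leaf} (uniqueness and existence of asymptotic frames, with the exponential hypothesis supplied by Lemma~\ref{lemma:strictlyasymptotic} since the endpoints are in $\Omega$) together with smoothness of the frame flow along compact segments, exactly as in the Cauchy-sequence estimate in that proof. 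A secondary subtlety is verifying that the composition criterion is compatible with all these choices — but this is precisely what Lemma~\ref{lemma:existence} guarantees, so once the sequence $\{\eta_n\}$ is fixed by the next corner, the construction of the $d_n$ is unobstructed. I expect no further difficulty in the strictly-negative-curvature specialization, where Case II is vacuous and the proposition reduces to the continuity argument sketched in the negative curvature subsection.
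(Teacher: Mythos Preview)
Your first part---showing that $w_{v'}$ is a distinguished field as the limit of the distinguished fields $w_{c'_n} := p(\dot c_n(0), \dot c'_n(0))(w_{c_n})$---is correct and matches the paper's argument. (The extra appeal to Lemma~\ref{recurrencelemma} at the end of that paragraph is unnecessary: continuity of sectional curvature under a limit of parallel fields already yields $K(w_{v'}(t),\dot\gamma_{v'}(t))\equiv -a^2$.)

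The second part, however, is over-engineered and has a gap. Your two-index diagonal scheme would transfer $w_{c'_m}$ (whose geodesic has forward endpoint $\xi_m$) to a field along $d_k$ (whose geodesic has backward endpoint $\eta_k$) via an intermediate geodesic joining $\xi_m$ and $\eta_k$; but for unrelated indices $m$ and $k$ there is no reason such a connecting geodesic exists, and even when it does, nothing forces it to lie near $\gamma_{v'}$, so the transfers are uncontrolled and the diagonal limit need not be $w_{v'}$. You relegate the composition criterion to a ``secondary subtlety,'' but it is in fact the central device here, and it eliminates the diagonalization entirely. The criterion pairs $\xi_n$ with $\eta_n$ (same index) and guarantees precisely that the geodesic $\gamma_n$ joining them exists and converges to $\gamma_{v'}$. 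The paper then transfers $w_{c'_n} \to w_{\gamma_n}$ via the Case~I map at the $\xi_n \in \Omega$ end and $w_{\gamma_n} \to w_{d_n}$ via the Case~I map at the $\eta_n \in \Omega$ end; since $c'_n$, $\gamma_n$, and $d_n$ all converge to $\gamma_{v'}$, all three sequences of fields converge to $w_{v'}$, and in particular $w_{d_n} \to w_{v'}$. One index, no diagonal argument, and the role of the composition criterion is exactly to supply the convergent intermediate $\gamma_n$ that your construction lacks.
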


\begin{proof}
Again, assume $v' \in W^s_g(v)$.  Since the maps $p(\dot{c}_n(0), \dot{c}'_n(0))$ are under Case I, they preserve distinguished fields.  Thus, $w_{v'}$, which is defined as the limit of $p(\dot{c}_n(0), \dot{c}'_n(0))(w_{c_n})$, will be a distinguished field.

Immediately, we have that $w_{v'}$ arises as a limit of distinguished fields along the geodesics $c'_n$. As in Figure \ref{fig:corner}, let $\gamma_n$ be the geodesic connecting $\xi_n$ and $\eta_n$ and let $d_n$ be the geodesic connecting the footpoint of $v'$ and $\eta_n$.  If $\gamma_{v'}(-\infty)$ is not in $\Omega$ the map $p_{\{\eta_n\}}(v', v'')$ supplies the sequence $\{\eta_n\}$.  In this case we have required that $(\{\xi_n\}, \{\eta_n\})$ satisfies the composition criterion for $v'$, so $c'_n$, $d_n$ and $\gamma_n$ all approach each other (and $\gamma_{v'}$) as $n \to \infty$.  If $\gamma_{v'}(-\infty)$ is in $\Omega$ it is easy to see that these geodesics still all converge as otherwise we would find a flat strip along $\gamma_{v'}$.  Using $p$ maps under Case I, the fields $w_{c'_n}$ can be transfered to distinguished fields $w_{\gamma_n}$ along $\gamma_n$ and subsequently to distinguished fields $w_{d_n}$ along $d_n$.  It is then clear that $w_{c'_n}, w_{\gamma_n}$ and $w_{d_n}$ all limit on $w_{v'}$; specifically, $w_{d_n} \to w_{v'}$ shows that $w_{v'}$ arises as a limit in the desired manner.
\end{proof}

This work proves

\begin{prop} \label{prop:preserves}
The transitivity group preserves distinguished vector fields that arise as limits of distinguished fields in the correct manner.
\end{prop}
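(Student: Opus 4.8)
The plan is to build an arbitrary element of the transitivity group out of corner maps and then pass to the closure. Fix $v$ with $\gamma_v(\pm\infty)\in\Omega$ and let $w_v$ be a distinguished field along $\gamma_v$ arising as a limit of distinguished fields $w_{c_n}$ in the correct manner. A generator of $H_v$ has the form $I(s)=F_{-T}\circ\prod_{i=0}^{k-1}p_-(v_i,v_{i+1})$, where $s=\{v_0=v,\dots,v_k=g_T(v)\}$ and, at each corner falling under Case~II, a sequence $\{\xi^i_n\}$ satisfying the composition criterion has been chosen. First I would run the two corner propositions in sequence: applying the Case~I proposition or Proposition~\ref{prop:case2}, according to which case $\{v_0,v_1\}$ falls under, shows that $p_-(v_0,v_1)(w_v)$ is again a distinguished field along $\gamma_{v_1}$ arising as a limit, in the correct manner, of distinguished fields along geodesics $d_n^{(1)}$ whose backward endpoints $\eta_n^{(1)}$ are exactly the sequence demanded by the next corner map $p_-(v_1,v_2)$ when that corner is Case~II, and are free to be chosen when it is Case~I. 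Thus the output of each corner is a legitimate input for the next, and an induction on the number of corners shows that $\prod_{i=0}^{k-1}p_-(v_i,v_{i+1})(w_v)$ is a distinguished field along $\gamma_{g_T(v)}$ still arising as a limit in the correct manner. Finally $F_{-T}$ is parallel transport, which preserves distinguished fields by hypothesis and commutes with limits of fields along converging geodesics, so $I(s)(w_v)$ is a distinguished field along $\gamma_v$ of the required type.

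Next I would handle the closure. Let $P$ denote the set of distinguished fields along $\gamma_v$ arising as limits in the correct manner; the previous paragraph shows every generator of $H_v$ maps $P$ into $P$. If $h_m\to h$ in $H_v\subseteq SO(v^{\perp})$ with each $h_m(P)\subseteq P$, then for $w\in P$ we have $h(w)=\lim_m h_m(w)$ with $h_m(w)\in P$; since $P$ is closed — a limit of distinguished fields is distinguished because parallel fields of a fixed norm form a compact set and the curvature condition passes to limits, while a diagonal argument shows the approximating geodesics can be rearranged to retain the correct form — we conclude $h(w)\in P$. Hence every element of $H_v$, not merely the generators, preserves the relevant fields.

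It remains to pass from $H_v$ to $H_w=gH_vg^{-1}$ for general $w$, where $g$ is the isometry of Equation~\ref{eqn:g}. But $g=F_{-T}\circ p_-(v_2,g_T(w))\circ p_-(v_1,v_2)\circ p(v,v_1)$ is a composition of exactly the same building blocks, so the argument of the first paragraph applies verbatim to show that $g$ and $g^{-1}$ carry distinguished fields arising as limits in the correct manner to fields of the same type. Therefore if $h\in H_v$ preserves such fields, so does $ghg^{-1}$, and $H_w$ preserves them as well.

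The main obstacle I expect is bookkeeping rather than a single hard estimate: one must verify that the precise flavor of ``arises as a limit in the correct manner'' produced as the \emph{output} of the Case~I proposition and Proposition~\ref{prop:case2} is exactly the flavor required as \emph{input} at the following corner — in particular that the approximating geodesics $d_n$ and their backward endpoints $\eta_n$ coincide with the sequence forced by a subsequent Case~II map, and that the composition criterion keeps every family of approximating geodesics converging to the correct limit geodesic (so that Lemma~\ref{recurrencelemma} and the corner propositions remain applicable at each stage). This matching is exactly what the corner propositions were phrased to guarantee, so the chaining goes through, but it is the step that requires the most care; the passage to the closure and the conjugation step are then routine.
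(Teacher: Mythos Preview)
Your proposal is correct and follows the paper's approach: in the paper, Proposition~\ref{prop:preserves} is stated immediately after the two corner propositions with the remark ``This work proves'' and no further argument, so the intended proof is exactly the chaining of the Case~I proposition and Proposition~\ref{prop:case2} across the corners of an ideal polygon that you carry out. Your explicit treatment of the closure step and of the conjugation to $H_w$ fills in details the paper leaves to the reader; the bookkeeping concern you flag about matching output format to input format at successive corners is precisely what the two corner propositions were phrased to guarantee, so the induction goes through as you describe.
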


\subsection{Proof of the main theorem}

We can now apply the results of Brin-Karcher and of Brin-Gromov as adapted to the rank 1 situation and prove Theorem 1 easily.

\begin{mainthm}
Let $M$ be a compact, rank 1, nonpositively curved manifold.  Suppose that along every geodesic in $M$ there exists a parallel vector field making sectional curvature $-a^2$ with the geodesic direction.  If $M$ is odd dimensional, or if $M$ is even dimensional and satisfies the sectional curvature pinching condition $-\Lambda^2 < K < -\lambda^2$ with $\lambda/\Lambda > .93$ then $M$ has constant negative curvature equal to $-a^2$.
\end{mainthm}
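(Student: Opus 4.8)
The plan is to assemble the two structural facts already in hand — that the transitivity group $H$ acts transitively on the fiber of $2$-frames over any $v\in T^1M$, and that (Proposition \ref{prop:preserves}) $H$ preserves distinguished vector fields that arise as limits in the correct manner — and then to upgrade the resulting curvature identity from a dense set of tangent vectors to all of $T^1M$ by continuity.

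First I would fix a \emph{good} vector $v\in T^1M$: one for which $\gamma_v$ is recurrent in both time directions and $\gamma_v(\infty),\gamma_v(-\infty)\in\Omega$. Recurrence is a full measure condition for $\mu$, the set $\Omega$ has full measure for the Patterson--Sullivan measures, and $\mu$ is absolutely continuous for $W^s_g$ and $W^u_g$; hence the good vectors form a set of full $\mu$-measure, and since $\mu$ has full support this set is dense in $T^1M$. For such $v$ the group $H_v$ is defined as in Definition \ref{defn:rk1transgpv}, and since $\gamma_v(\infty)\in\Omega$ we are in Case I, so the (unit-normalized) distinguished field $w_v$ along $\gamma_v$ arises as a limit of distinguished fields in the required sense via the constant sequence $\xi_n\equiv\gamma_v(\infty)$, $c_n\equiv\gamma_v$. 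Proposition \ref{prop:preserves} then gives that every $h\in H_v$ carries $w_v(0)$ to a vector $u\in v^{\perp}$ with $K(u,\dot\gamma_v(0))=-a^2$; the $F_{-T}$ factor appearing in the elements $I(s)$ is harmless because the frame flow preserves distinguished fields by hypothesis, and passing to the closure is harmless because $K(\,\cdot\,,\dot\gamma_v(0))=-a^2$ is a closed condition.

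Next I would use the transitivity of $H$ on the fiber of $2$-frames over $v$. Identifying that fiber with the unit sphere of $v^{\perp}$, transitivity says the $H_v$-orbit of the unit vector $w_v(0)$ is all of that unit sphere; combined with the previous step, \emph{every} unit $u\in v^{\perp}$ satisfies $K(u,\dot\gamma_v(0))=-a^2$, that is, every tangent $2$-plane containing $v$ has sectional curvature $-a^2$. Let $G'\subset T^1M$ be the set of vectors with this property; we have shown $G'$ contains the dense set of good vectors, so $G'$ is dense. To remove the genericity, take an arbitrary $2$-plane $\sigma=\mathrm{span}(x,y)$ with $\{x,y\}$ orthonormal, pick $v_k\to x$ with $v_k\in G'$, let $y_k$ be the unit vector along the component of $y$ orthogonal to $v_k$, and set $\sigma_k=\mathrm{span}(v_k,y_k)$; then $\sigma_k\to\sigma$ and $K(\sigma_k)=-a^2$ since $v_k\in G'$, so $K(\sigma)=-a^2$ by continuity of sectional curvature. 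Hence $K\equiv-a^2$ and $M$ has constant curvature $-a^2$. The dimension hypotheses enter only through the cited transitivity statement (Brin--Gromov, Theorem \ref{thm:BG}, in odd dimension; Brin--Karcher, Theorem \ref{thm:BK}, in even dimension under $\lambda/\Lambda>.93$); the rest of the argument is dimension-free.

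Given the machinery already developed, no serious obstacle remains; the only point that needs care is verifying that the prescribed distinguished field $w_v$ satisfies the ``arises as a limit in the correct manner'' hypothesis of Proposition \ref{prop:preserves}, which is why one restricts to $v$ with $\gamma_v(\infty)\in\Omega$ so that a constant sequence suffices, together with the routine remark that full support of $\mu$ makes the full-measure good set dense, legitimizing the final continuity step.
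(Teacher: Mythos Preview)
Your proposal is correct and follows essentially the same route as the paper's proof: combine Proposition~\ref{prop:preserves} with the transitivity of $H$ on $2$-frames to force $K(\cdot,v)\equiv -a^2$, and conclude. The paper's argument is a single terse paragraph that neither isolates a ``good'' $v$ nor spells out the passage to all tangent $2$-planes; your version supplies these details, in particular the observation that for $v$ with $\gamma_v(\pm\infty)\in\Omega$ the constant sequence exhibits $w_v$ as a limit in the required sense, and the density-plus-continuity step at the end. The only cosmetic difference is that the paper implicitly leans on the extension $H_w=gH_vg^{-1}$ of Definition~\ref{defn:rk1transgp} (the connecting isometry $g$ itself carries a correctly-arising distinguished field to one at $w$, so the transitivity of $H_w$ settles every fiber directly), whereas you instead restrict to a dense set of $v$ and close up by continuity of sectional curvature; both finishes are valid.
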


\begin{proof}
We showed in Proposition \ref{prop:preserves} that the sectional curvature $-a^2$ fields that arise in the desired way as limits are preserved by the transitivity group.  In the setting of the theorem, the adapted results of Brin-Gromov and Brin-Karcher tell us that the transitivity group acts transitively on $v^\perp \subset T^1M$.  In particular, by considering the orbit of a distinguished field that arises correctly as a limit we see that $K(\cdot, v)$ is identically $-a^2$, and the theorem is proved.
\end{proof}


\section{Parallel fields and Jacobi fields} \label{sec:parallel}

In \cite{sphericalrank} a distinction is made between `weak' and `strong' rank.  The existence of \emph{parallel} fields making extremal curvature is called strong rank; the existence only of \emph{Jacobi} fields making extremal curvature is called weak rank.  A parallel field scaled by a solution to the real variable version of the Jacobi equation (where the standard derivative replaces the covariant derivative) produces a Jacobi field.  Thus, a proof under the less stringent hypothesis of weak rank implies a proof for strong rank.  Hamenst\"{a}dt's is the sole result prior to this paper for weak rank.  She states her main theorem for parallel fields only, but she shows in Lemma 2.1 that in negative curvature a Jacobi field making maximal curvature is a parallel field scaled by a function \cite{Hamenstadt}.  Essentially, she shows that Jacobi fields making maximal curvature grow at precisely the rate one finds for the constant curvature case.  Connell accomplishes the same in \cite{Connell} Lemma 2.3.  This, together with some of the arguments below, shows that these Jacobi fields are in fact parallel fields scaled by an appropriate function.  Therefore, Corollary 2 is a weak rank result, needing only the Jacobi field hypothesis.

In this section we show that Jacobi fields making \emph{minimal} curvature with the geodesic direction are also scaled parallel fields.  This will justify the phrasing of Corollary 1 as a weak rank result.  In this section, $\langle \cdot, \cdot \rangle$ will denote the Riemannian inner product and $R(\cdot, \cdot) \cdot$ the curvature tensor.

First, note that we need only consider non-vanishing Jacobi fields; hence it will be enough to prove that stable and unstable Jacobi fields are scaled parallel fields.  Stable Jacobi fields are those which have norm approaching zero as $t \to \infty$; unstable Jacobi fields have the same property in the negative time direction.  Suppose $J(t)$ is a stable Jacobi field along the geodesic $\gamma(t)$ making curvature $-a^2$ with the geodesic (take $a > 0$ now), where $-a^2$ is the curvature minimum for the manifold 
(the modifications of what follows for unstable Jacobi fields are straightforward).  The Rauch Comparison Theorem (see \cite{doC} Chapt 10, Theorem 2.3) can be used to show that

\begin{equation} \label{eq:geq}
|J(t)| \geq |J(0)| e^{-at}.
\end{equation}

We would like to show that equality is achieved in (\ref{eq:geq}).  Write $J(t)=j(t)U(t)$ where $j(t)=|J(t)|$ and $U(t)$ is a unit vector field.  Then the Jacobi equation for $J$ reads:

\begin{equation} \label{eq:jacobi}
j''U + 2j'U' + jU'' + jR(\dot{\gamma}, U)\dot{\gamma} = 0
\end{equation}
where $j'$ denotes the standard derivative and $U'$ denotes covariant derivative.  Taking the inner product of (\ref{eq:jacobi}) with $U$ and noting that $\langle U'', U\rangle = - \langle U', U' \rangle$ we obtain

\begin{equation} \label{eq:simpjacobi}
j'' - j(\langle U', U'\rangle +a^2) = 0.
\end{equation}

We now know the following about the magnitude of $J$: $j \geq 0$ by definition, $\lim_{t \to \infty} j(t) = 0$ since $J$ is a stable Jacobi field, and $j'' \geq a^2 j$ by (\ref{eq:simpjacobi}).  These allow the following conclusion; its proof was shown to the author by Jeffrey Rauch:

\begin{lemma} \label{Rauchlemma}
Let $j$ be a non-negative, real valued function satifsying $j'' \geq a^2 j$ and $\lim_{t \to \infty} j(t) = 0$.  Then $j(t) \leq j(0) e^{-at}$.
\end{lemma}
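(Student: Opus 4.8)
The plan is to use a maximum-principle / comparison argument for the ODE inequality $j'' \geq a^2 j$, comparing $j$ against the exponential solution $j(0)e^{-at}$ of the associated equality. Set $h(t) = j(t) - j(0)e^{-at}$. Then $h$ satisfies $h'' \geq a^2 h$ (since $j'' \geq a^2 j$ and $(j(0)e^{-at})'' = a^2 j(0)e^{-at}$), together with $h(0) = 0$ and $\lim_{t\to\infty} h(t) = 0$ (because both $j(t) \to 0$ and $j(0)e^{-at} \to 0$). The goal is to conclude $h(t) \leq 0$ for all $t \geq 0$, which is exactly the assertion $j(t) \leq j(0)e^{-at}$.

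The key step is to show that a function $h$ with $h'' \geq a^2 h \geq 0$ wherever $h \geq 0$, satisfying $h(0)=0$ and $h(\infty) = 0$, cannot be positive anywhere on $[0,\infty)$. Suppose for contradiction that $h(t_0) > 0$ for some $t_0 > 0$. Since $h$ is continuous, $h(0) = 0$, and $h(t) \to 0$ as $t \to \infty$, the function $h$ attains a positive maximum at some interior point $t^* \in (0,\infty)$ (the supremum of $h$ on $[0,\infty)$ is positive and is attained because $h$ is eventually small and $h(0)=0$; formally, restrict to a compact interval $[0,N]$ on which $h$ exceeds $h(t_0)/2$ only in the interior, and take the max there). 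At such an interior maximum we have $h''(t^*) \leq 0$, but the differential inequality gives $h''(t^*) \geq a^2 h(t^*) > 0$, a contradiction. Hence $h \leq 0$ on $[0,\infty)$, as desired.

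I expect the main obstacle to be the bookkeeping needed to guarantee that the positive supremum of $h$ is genuinely attained at an interior point, rather than only approached — this is where the hypothesis $\lim_{t\to\infty} j(t) = 0$ is essential (it rules out the growing exponential $e^{at}$, which satisfies the inequality but violates the conclusion). One clean way to handle this rigorously is to introduce, for small $\varepsilon > 0$, the comparison function $j(0)e^{-at} + \varepsilon(e^{at} - e^{-at})$ or simply $j(0)e^{-at} + \varepsilon e^{at}$, note that $h_\varepsilon := j - j(0)e^{-at} - \varepsilon e^{at}$ still satisfies $h_\varepsilon'' \geq a^2 h_\varepsilon$, has $h_\varepsilon(0) = -\varepsilon < 0$, and now tends to $-\infty$ as $t \to \infty$; so any interior maximum of $h_\varepsilon$ is strict and the contradiction argument applies verbatim to show $h_\varepsilon \leq 0$ everywhere. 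Letting $\varepsilon \to 0$ then yields $h \leq 0$, completing the proof. This $\varepsilon$-perturbation trick avoids all compactness subtleties and is the route I would write up.
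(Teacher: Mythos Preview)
Your proof is correct and is essentially the same maximum-principle argument as the paper's: both compare $j$ to a solution of $u''=a^2u$ and derive a contradiction at a positive interior maximum of the difference. The only cosmetic difference is how the non-compact domain is handled---the paper truncates to $[0,R]$ with $g_R$ matching $j$ at both endpoints and then lets $R\to\infty$ (so $g_R\to j(0)e^{-at}$), whereas you either argue directly that the decay hypothesis forces an attained interior maximum or perturb by $\varepsilon e^{at}$ and let $\varepsilon\to 0$; all three devices serve the identical purpose.
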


\begin{proof}
We have that $a^2j-j'' \leq 0.$  On the interval $0 \leq t \leq R$ for $R\gg1$ define $g_R$ by $g_R(0)=j(0)$, $g_R(R)=j(R)$ and $a^2g_R-g_R'' = 0$.  Note that as $R \to \infty$, $g_R \to j(0)e^{-at}$.  We claim that $j \leq g_R$; the Lemma follows in the limit.

This claim is essentially the maximum principle.  First, $j \leq g_R$ holds at $0$ and $R$.  Now suppose $j-g_R$ has a positive maximum at $c \in (0, R)$.  Then $(j''-g_R'')(c) \leq 0$.  However, we know $a^2(j-g_R) - (j''-g_R'') \leq 0$, so a positive value of $j-g_R$ at $c$ together with a negative value of $j''-g_R''$ yields a contradiction.  Therefore $j \leq g_R$ holds on all of $[0, R]$ as desired.
\end{proof}

This Lemma, together with equation (\ref{eq:geq}), tells us that $|J(t)| = |J(0)|e^{-at}$.  Examining equation (\ref{eq:simpjacobi}) we see that having the growth rate $e^{-at}$, as in the constant curvature $-a^2$ case, implies that $U' =0$, that is, $J$ is a scaled parallel field, as desired.


\section{The dynamical perspective} \label{sec:Lyap}

In this section we discuss how the results of Connell in \cite{Connell} can be adapted to prove Theorem 2 as a simple consequence of Corollary 1.  The necessary changes are for the most part cosmetic; the discussion here is included for completeness, but the author does not claim to have added anything of substance to Connell's work.  The notation below that has not already been assigned follows Connell's for ease of reference.

Recall that Lyapunov exponents are a tool for measuring long-term asympotic growth rates in dynamical systems (see Katok and Mendoza's Supplement to \cite{H-K} section S.2 for an exposition).  In the setting of the geodesic flow they can be defined as follows.  Let $v \in T^1M$ and $u \in v^{\perp}$.  Let $J_u(t)$ be the unstable Jacobi field along $\gamma_v$ with initial condition $J_u(0) = u$.  Then, the \emph{positive Lyapunov exponent at $v$ in the $u$-direction} is
\[\lambda_v^+(u) = \limsup_{t \to \infty} \frac{1}{t} log|J_u(t)|. \]
Define \[\lambda_v^+ = \max_{u \in v^{\perp}} \lambda_v^+(u). \]
This is the maximal Lyapunov exponent at $v$; the curvature bound $-a^2 \leq K$ (again, take $a>0$) implies that $\lambda_v^+ \leq a$.  Let
\[ \Omega = \{v \in T^1M : \lambda_v^+ = a\}. \]
We can now rephrase Theorem 2 more succinctly.

\begin{thm2}
Let $M$ be a compact manifold with sectional curvature $-a^2 \leq K \leq 0$.  Suppose that $\Omega$ has full measure with respect to a geodesic flow-invariant measure $\mu$ with full support.  If $M$ is odd dimensional, or if $M$ is even dimensional and satisfies the sectional curvature pinching condition $-a^2 \leq K < -\lambda^2$ with $\lambda/a > .93$ then $M$ is of constant curvature $-a^2$.
\end{thm2}

Connell shows in the upper rank case that the dynamical assumption implies the geometric one, that is, that the manifold in fact has higher rank, allowing the application of an appropriate rank rigidity theorem.  He first shows (\cite{Connell} Proposition 2.4) that along a closed geodesic $\lambda_v^+ = a$ implies the existence of an unstable Jacobi field making curvature $-a^2$ with the geodesic direction.  Essentially, if the Jacobi field giving rise to the Lyapunov exponent does not have this curvature, it will 
continually see non-extremal curvature a positive fraction of the time as it moves around the closed geodesic.  This contradicts the supposed value of the Lyapunov exponent.  The lower curvature bound version of the argument is exactly the same as that presented by Connell, with the proper inequalities reversed; also note that the work in section \ref{sec:parallel} of this paper gives the results analogous to Connell's Lemma 2.3 necessary for the argument.

It is clear that if a dense set of geodesics have the distinguished Jacobi fields, then all geodesics will.  Since the velocity vectors for closed geodesics are dense in $T^1M$, Connell finishes his proof in section 3 of \cite{Connell} by showing that these vectors are all in $\Omega$ and using the argument of the previous paragraph.  Adapted to the setting of Theorem 2 the argument runs as follows.  If $w \in T^1M$ is tangent to a closed geodesic and $\lambda_w^+ < a$ the previous paragraph implies that any unstable Jacobi field along $\gamma_w$ must make curvature strictly greater than $-a^2$ a positive fraction of the time.  By continuity, this will also be true of any unstable Jacobi field along a geodesic $\gamma_v$ in a sufficiently small neighborhood of $\gamma_w$ (in the Sasaki metric on $T^1M$).  The ergodic theorem implies that for a full measure set of $v \in T^1M$, $\gamma_v$ will spend a positive fraction of its life in this small neighborhood of the periodic geodesic $\gamma_w$; the positivity follows from the fact that $\mu$ has full support.  The intersection of this full measure set with the full measure set $\Omega$ thus contains vectors $v$ which have $\lambda_v^+ = a$ but spend a positive fraction 
of their life so close to $\gamma_w$ that no Jacobi fields along them can make the minimal curvature $-a^2$ with the geodesic direction during this fraction of the time.  In fact, since $\gamma_w$ is compact, so is the closure of this small neighborhood and therefore the curvature between these Jacobi fields and the geodesics, when in this neighborhood, can be bounded away from $-a^2$, i.e. $K(J_u, \dot{\gamma}_v)>c>-a^2$ for a fixed $c$.  Having this curvature bound a positive fraction of the time contradicts $\lambda_v^+ = a$; therefore all closed geodesics must lie in $\Omega$ and the argument is complete.

Again, this version of the argument, relevant for the lower curvature bound situation, is the same as that presented by Connell with the proper inequalities reversed.  Thus, the dynamical assumption implies the geometric assumption of Corollary 1 and Theorem 2 follows.  Note that for these arguments the extremality of the distinguished curvature is essential and we do not obtain a result that parallels Theorem 1 in allowing non-extremal distinguished curvature.


\section{Conclusion} \label{sec:conclusion}

We conclude with a few remarks on possible extension of this work.  Note that in even dimension a result directly parallel to our odd dimensional result cannot be hoped for.  Since parallel translation preserves the complex structure on a K\"{a}hler manifold the 2-frame flow will not be ergodic (see \cite{Brin-Gromov} for some results on unitary frame bundles).  These known counterexamples to ergodic frame flow are excluded by requiring $-1<K<-1/4$, leading Brin to conjecture that strict 1/4-pinching implies that the frame flow is ergodic (\cite{Brin-survey} Conjecture 2.6).  A positive answer to this conjecture, or any extended results for ergodicity of the 2-frame flow in negative curvature would extend the results on rank rigidity presented here correspondingly, using the same proof as presented above.  One still hopes that lower hyperbolic rank rigidity (in the sense that higher rank implies the space is locally symmetric) could be true without any curvature pinching in even dimensions, and the result here as well as the extensive analogous results for the other rank rigidity theorems seem to make such a theorem more likely.  However, such a result would call for a significantly different method of proof from that presented here.

\end{document}